\documentclass[11pt]{amsart}
\usepackage[hmargin=2.6cm,bmargin=2.55cm,tmargin=3.05cm]{geometry}
\usepackage[usenames]{color}
\usepackage{graphbox}
\usepackage{subcaption}

\usepackage{mathtools}
\numberwithin{equation}{section}
\mathtoolsset{showonlyrefs,showmanualtags}

\allowdisplaybreaks

\usepackage{amsmath,amssymb,amsthm,amsfonts,enumerate,url,mathbbol,mathrsfs,esint,tikz,graphicx,pifont,float}
\usetikzlibrary{calc}
\usepackage{hyperref}

\usepackage[utf8]{inputenc}

\numberwithin{equation}{section}

\theoremstyle{plain}
\newtheorem{theorem}{Theorem}[section]
\newtheorem{lemma}[theorem]{Lemma}
\newtheorem{proposition}[theorem]{Proposition}
\newtheorem{corollary}[theorem]{Corollary}

\theoremstyle{definition}
\newtheorem{definition}[theorem]{Definition}

\newtheorem{remark}[theorem]{Remark}
\newtheorem{problem}[theorem]{Open Problem}

\numberwithin{equation}{section}
\numberwithin{figure}{section}

\newcommand{\R}{\mathbb{R}}
\newcommand{\N}{\mathbb{N}}
\newcommand{\Z}{\mathbb{Z}}

\newcommand{\PS}{\mathcal{S}}

\newcommand{\GammaClass}{\mathcal{G}_\Gamma}
\newcommand{\GClass}{\mathcal{G}_G}

\newcommand{\gamaste}{\Gamma^\ast}

\title{Bounds on eigenvalue ratios of quantum graph Laplacians}
\author{Evans M. Harrell II}
\author{James B.\ Kennedy}
\author{Gabriel J. Ramos}

\address{School of Mathematics, Georgia Institute of Technology, Atlanta, GA
30332-0160, United States of America}
\email{harrell@math.gatech.edu}

\address{James B. Kennedy, Departament of Mathematics, University of Aveiro, 3810-193 Aveiro, Portugal}
\email{jbkennedy@ua.pt}

\address{Grupo de F\'isica Matem\'atica, Instituto Superior T\'ecnico, Av.\ Rovisco Pais, 1049-001 Lisboa, Portugal} 
\email{fc58288@alunos.ciencias.ulisboa.pt}

\subjclass[2020]{34B45 (primary); 35J05, 35P15, 81Q35 (secondary).}

\keywords{Eigenvalue ratios, universal inequalities, quantum graphs}

\thanks{This work was supported by the Funda\c{c}\~ao para a Ci\^encia e a Tecnologia (FCT, \url{https://ror.org/00snfqn58}), Portugal, within the scope of the project Spectral Optimal Partitions: geometric and numerical analysis, reference 2023.13921.PEX, \url{https://doi.org/10.54499/2023.13921.PEX} (JBK), and via the research centers CIDMA, references UID/04106/2025, \url{https://doi.org/10.54499/UID/04106/2025} and UID/PRR/04106/2025, \url{https://doi.org/10.54499/UID/PRR/04106/2025} (JBK) and GFM, references UID/00208/2025 \url{https://doi.org/10.54499/UID/00208/2025} (all authors) and UID/PRR/00208/2025 \url{https://doi.org/10.54499/UID/PRR/00208/2025} (JBK) under the FCT Multi-Annual Financing Program for R{\&}D Units.
}
\date{\today}

\begin{document}

\begin{abstract}
    We study ratios of eigenvalues of the Laplacian on compact metric graphs. Our goals are threefold: First, we prove a sharp Ashbaugh--Benguria-type bound for the ratio of the first two eigenvalues on compact trees with Dirichlet conditions at all leaves, concretely showing that the ratio is maximized when the graph is an interval or an equilateral star. This improves a previous Payne--P\'olya--Weinberger-type result due to Nicaise [Bull.\ Sci.\ Math., II.\ S\'er.\ \textbf{111} (1987), 401--413]. Second, we extend this bound to a set of inequalities for the ratio of any pair of eigenvalues of such compact Dirichlet trees which respect the Weyl asymptotics up to an absolute constant. Third, we show that on non-trees, on which we also allow any mix of Neumann and Dirichlet conditions at the leaves, it is possible to recover bounds on the eigenvalue ratios depending only on the number of independent cycles and the number of Neumann leaves, in addition to the eigenvalue indices. This complements previously known counterexamples to analogues of the Ashbaugh--Benguria bound for general quantum graphs, by showing that the only way the bound can fail is through cycles and Neumann leaves, and by explicitly quantifying the extent to which it can fail.
\end{abstract}

\maketitle

\section{Introduction}
\label{sec:intro}

Our goal is to give an (almost) complete treatment of estimates for ratios of eigenvalues of Laplacians on compact metric graphs.

Over 30 years ago, Ashbaugh and Benguria \cite{AB92,AB91} proved that the first two eigenvalues $0 < \lambda_1 (\Omega) < \lambda_2 (\Omega)$ of the Dirichlet Laplacian on a bounded Euclidean domain $\Omega \subset \R^d$ satisfy the sharp, scale-invariant bound
\begin{equation}
\label{eq:ab-domain}
    \frac{\lambda_2 (\Omega)}{\lambda_1 (\Omega)} \leq \frac{\lambda_2(B)}{\lambda_1(B)} = \frac{j_{\frac{d}{2},1}^2}{j_{\frac{d}{2}-1,1}^2},
\end{equation}
where $B$ is any ball in $\R^d$, thus proving an old conjecture of Payne, P\'olya and Weinberger (PPW) \cite{PPW56}, who had established the weaker inequality
\begin{equation}
\label{eq:ppw-domain}
    \frac{\lambda_2 (\Omega)}{\lambda_1(\Omega)} \leq 1 + \frac{4}{d}.
\end{equation}
This was one of the starting points of the study of \emph{universal inequalities} for eigenvalues, that is, inequalities where the bound depends 
only
on a dimensional constant. See \cite{A17} for an excellent and still fairly current survey.

On quantum graphs, the situation is more complicated. Let $\Gamma$ be a compact metric graph and let $0 < \lambda_1 (\Gamma) \leq \lambda_2 (\Gamma)$ be the first two eigenvalues of the Laplacian with some mix of Dirichlet and 
standard, or Neumann--Kirchhoff, conditions at the vertices (see Section~\ref{sec:prelim} for details; for brevity we will simply use the term ``Kirchhoff conditions''). For the meantime we assume at least one Dirichlet vertex to ensure that $\lambda_1 (\Gamma) \neq 0$).

Nicaise, in his seminal paper \cite{N87}, already extended the PPW argument to the case of \emph{Dirichlet trees}, that is, cases where the graph $\Gamma$ has no cycles and all degree-one vertices are equipped with a Dirichlet condition (``Dirichlet leaves''), but all interior vertices satisfy a Kirchhoff condition.  
In \cite[Th\'eor\`eme 4.3]{N87}, Nicaise
actually proved PPW for any pair of consecutive eigenvalues, and a slightly stronger inequality for the first two:

\begin{theorem}[Nicaise]
\label{thm:nicaise}
    Let $\Gamma$ be a compact Dirichlet tree, and let $k \geq 1$, then
    \begin{displaymath}
        \frac{\lambda_{k+1} (\Gamma)}{\lambda_k (\Gamma)} \leq 5.
    \end{displaymath}
    If $k=1$, the constant may be improved to $2+\sqrt{5} \approx 4.236$.
\end{theorem}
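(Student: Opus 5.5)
The plan is to adapt the Payne--P\'olya--Weinberger argument to the Dirichlet tree $\Gamma$, with the Euclidean coordinate replaced by a distance-type function defined along the tree. Let $u_1,\dots,u_k$ be $L^2$-orthonormal eigenfunctions for $\lambda_1,\dots,\lambda_k$. Because $\Gamma$ is a tree, we can fix a root and an orientation and then build a function $x:\Gamma\to\R$ with $|x'|=1$ on every edge. The simplest choice is a signed arclength coordinate, obtained by choosing an orientation of each edge so that the signs match consistently at each vertex. We then take test functions $\varphi = x u_k - \sum_{j\le k} a_j u_j$, where the coefficients $a_j=\int_\Gamma x u_k u_j$ are chosen to make $\varphi$ orthogonal to $u_1,\dots,u_k$.

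\textbf{Admissibility.} The test function must lie in the form domain. It has to be continuous at vertices and vanish at the Dirichlet leaves. The second condition is automatic, since $u_k$ vanishes there. Continuity is the issue: a signed arclength coordinate can jump at a vertex. To avoid this, the first thing I would try is $x=$ distance from a fixed point (for example the root, or a suitably chosen interior point). This function is continuous, and $|x'|=1$ holds almost everywhere. The tree structure matters here because it makes the distance function geodesic, with no cut locus inside edges. It is also what allows the integration by parts below: on each edge $x''=0$, and the boundary terms at vertices combine through the Kirchhoff conditions.

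\textbf{Rayleigh quotient computation.} Next we apply the min-max principle, $\lambda_{k+1}\le \int|\varphi'|^2/\int\varphi^2$, and carry out the standard PPW/Hile--Protter calculation. Writing $\int|(xu_k)'|^2=\lambda_k\int x^2u_k^2+\int u_k^2$ uses $|x'|^2=1$ and integration by parts on edges. The extra vertex terms have the form $\sum_v u_k^2(v)\sum_{e\sim v}\partial_e x$. At the root they are nonnegative, and at interior vertices the flux of $x$ out of the vertex, $(\deg v -2)$, is nonnegative. So they carry a sign that must be controlled, either by choosing the point where $u_k$ is handled favourably or by absorbing these terms.

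The main obstacle is exactly these vertex terms, where the graph structure enters. My plan is to treat them via the commutator identity $[\Delta,x]u=2x'u'+x''u$. The delta-mass part of $x''$ at branching vertices contributes $\pm(\deg v-2)u_k(v)^2$, and I would bound it in terms of $\int u_k'^2$ using a one-dimensional Sobolev-type estimate on edges. This produces the slightly worse constant $5$ for general $k$; in $d=1$ the exact cancellation would give $1+4=5$.

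\textbf{The case $k=1$.} Here we exploit that $u_1>0$, so we can place the point of $x$ in order to optimise. We also use the single test function $\varphi=(x-c)u_1$, which involves no Gram--Schmidt error terms beyond the constant $c$. Optimising in the parameter then yields a quadratic inequality of the form $\mu^2-4\mu-1\le0$ for the ratio $\mu=\lambda_2/\lambda_1$. Its root is $\mu = 2+\sqrt5$, which is the claimed improvement.
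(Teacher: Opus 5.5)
\textbf{There is a genuine gap.} You correctly identify the vertex terms as the crux, but the proposal never overcomes them, and both constants are asserted rather than derived.

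\textbf{Where the vertex terms actually enter.} You have placed them in the wrong step. For any continuous, edgewise linear $x$ one has exactly $\int_\Gamma|(xu)'|^2=\lambda\int_\Gamma x^2u^2+\int_\Gamma x'^2u^2$. To see this, expand and use $\int_\Gamma (x^2u)'u'=\lambda\int_\Gamma x^2u^2$, which holds because $x^2u$ lies in the form domain; there are no vertex terms. They actually enter in the lower bound for $\int x^2u^2$. Orienting edges away from the base point $p$, one has $\int_\Gamma u^2=\int_\Gamma x'u^2=-2\int_\Gamma xuu'-\sum_{v}(\deg v-2)\,x(v)\,u(v)^2$.

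\textbf{Orthogonality destroys the favourable sign.} With $x=\mathrm{dist}(\cdot,p)\ge 0$ the vertex sum has the favourable sign. But orthogonality forces you to use $x-c$ (or $xu_k-\sum_j a_ju_j$), and then the weights $x(v)-c$ are no longer of one sign. You give no mechanism for choosing $p$ so that both requirements hold, and the obvious candidates cannot do it. On the equilateral three-star, no $g$ equal to $\pm\mathrm{dist}(\cdot,p)$ on each component of $\Gamma\setminus\{p\}$ satisfies $\int_\Gamma g\,u_1^2=0$, even though these are functions whose vertex terms all keep the good sign.

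\textbf{The general-$k$ case has a second problem.} With the single test function $xu_k-\sum_{j\le k}a_ju_j$, the numerator contains the nonnegative error $\sum_{j<k}(\lambda_k-\lambda_j)a_j^2$. PPW removes this only by summing over $i=1,\dots,k$ and using the antisymmetric relation $2\int u_i'u_j=(\lambda_i-\lambda_j)\int xu_iu_j$. On a tree this relation picks up vertex contributions of the form $(\deg v-2)\,u_i(v)u_j(v)$, which have no sign for $i\neq j$.

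\textbf{The proposed Sobolev fix cannot give a universal constant.} Bounding such terms by a trace or Sobolev inequality on adjacent edges costs a factor depending on the edge lengths. So it cannot yield any universal constant, and certainly not exactly $5$, which is already the lossless one-dimensional PPW value.

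\textbf{The case $k=1$.} The quadratic $\mu^2-4\mu-1\le0$ does not come out of the family $(x-c)u_1$. That computation is linear: $\lambda_2-\lambda_1\le\int u_1^2/\int(x-c)^2u_1^2\le4\lambda_1$. So it gives at best $\mu\le5$, and reaching $2+\sqrt5$ needs an ingredient you have not supplied.

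\textbf{How to rescue your sign observation.} Remove orthogonality altogether. Cut $\Gamma$ at a maximum point $v$ of $u_1>0$. On each branch $\Gamma_i$, $u_1$ then satisfies a Neumann condition at $v$, so $\tau_1(\Gamma_i)=\lambda_1(\Gamma)$. The function $\mathrm{dist}(\cdot,v)\,u_1$ is admissible for the Dirichlet problem on $\Gamma_i$, and its vertex terms have the good sign. This gives $\lambda_1(\Gamma_i)\le5\lambda_1(\Gamma)$, hence $\lambda_2(\Gamma)\le\max_{i=1,2}\lambda_1(\Gamma_i)\le5\lambda_1(\Gamma)$. For general $k$:
\begin{itemize}
\item cut along the $k$ nodal domains of $\psi_k$ for generic edge lengths;
\item use $\lambda_{k+1}(\Gamma)\le\min_i\lambda_2(\Gamma_i)\le 5\lambda_k(\Gamma)$;
\item pass to arbitrary edge lengths by continuity.
\end{itemize}

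\textbf{Comparison with the paper.} This cutting scheme is the architecture the paper uses. The paper does not reprove Nicaise's theorem; it is quoted from \cite{N87}. It does, however, supersede it. It replaces the crude branch estimate by the sharp bound $\lambda_1/\tau_1\le4$ of Theorem~\ref{thm:vc}, proved by edge-length optimisation and pendant-star surgery. It thereby obtains $\lambda_2/\lambda_1\le4$ and $\lambda_{k+1}/\lambda_k\le\lambda_{2k}/\lambda_k\le4$ for all $k$, which also covers the $k=1$ refinement to $2+\sqrt5$.
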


This is not the whole story, however. It was shown in \cite{DH10}, see also \cite[Section~6]{BHY24}, that if we allow $\Gamma$ to have cycles (and/or Neumann conditions at degree-one vertices, ``Neumann leaves''), then not only does the upper bound in Theorem~\ref{thm:nicaise} fail, but \emph{no} universal upper bound is possible. A prototypical counterexample sequence of star graphs $\Gamma$ has the form depicted in Figure~\ref{fig:big-star}.
\begin{figure}[H]
\begin{tikzpicture}[scale=0.8]
\coordinate (a) at (0,0);
\coordinate (b) at (4,0);
%\draw[fill] (a) circle (1.2pt);
\draw[thick] (a) -- (b);
\draw[thick,fill=white] (a) circle (2pt);
\draw[fill] (b) circle (1.2pt);
\draw[thick] (b) -- (5,0);
\draw[thick] (b) -- (4,1);
\draw[thick] (b) -- (4,-1);
\draw[thick] (b) -- (4.71,0.71);
\draw[thick] (b) -- (4.71,-0.71);
\draw[thick] (b) -- (3.29,0.71);
\draw[thick] (b) -- (3.29,-0.71);
\draw[fill] (5,0) circle (1.2pt);
\draw[fill] (4,1) circle (1.2pt);
\draw[fill] (4,-1) circle (1.2pt);
\draw[fill] (4.71,0.71) circle (1.2pt);
\draw[fill] (4.71,-0.71) circle (1.2pt);
\draw[fill] (3.29,0.71) circle (1.2pt);
\draw[fill] (3.29,-0.71) circle (1.2pt);
\end{tikzpicture}
\caption{A star $\Gamma$ with a single Dirichlet condition at the end of its long edge. (The white circle indicates a Dirichlet condition, black circles indicate Kirchhoff conditions.)}
\label{fig:big-star}
\end{figure}
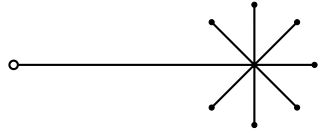

A simple test function argument shows that by adding more short edges to the star $\Gamma$, we can force $\lambda_1 (\Gamma)$ down to zero, while $\lambda_2 (\Gamma)$ remains bounded away from zero (the zero of its eigenfunction must remain on the long edge, meaning $\lambda_2$ cannot drop below the first Dirichlet eigenvalue of an interval of the same length as the long edge). Note that the same argument would work equally well if the short edges were replaced by small loops, creating a graph (a ``bunch of balloons'') with a large number of independent cycles (first Betti number) $\beta$; this was the original example found in \cite{DH10}.

This is still not the full picture, as it raises several natural questions as to when Nicaise' PPW-type bound (Theorem~\ref{thm:nicaise}) does in fact hold, whether Theorem~\ref{thm:nicaise} is actually sharp -- noting that Nicaise' result actually predates the Ashbaugh--Benguria theorem by several years -- , what can be said for more general eigenvalue ratios, and when and how badly the inequality can go wrong on non-Dirichlet non-trees. We note in passing that this latter question can be rather subtle: in \cite{BHY24} the authors find a family of such non-trees, ``saguaro graphs'', for which one still has the bound
\begin{displaymath}
    \frac{\lambda_2(\Gamma)}{\lambda_1(\Gamma)} \leq 2 + \sqrt{5}
\end{displaymath}
(and others, ``ornamented trees'', for which one still has the weaker PPW bound $\frac{\lambda_2(\Gamma)}{\lambda_1(\Gamma)} \leq 5$).

Our goals are thus threefold:
\begin{enumerate}
    \item \label{item:ab-graph} In Theorem 
    \ref{thm:ashbaugh-benguria} we obtain
    an optimal bound of the form
    \begin{equation}
        \label{eq:ab-graph}
        \frac{\lambda_2 (\Gamma)}{\lambda_1 (\Gamma)} \leq 4
    \end{equation}
    for all compact Dirichlet trees $\Gamma$, thus moving from a non-sharp PPW-type bound to a sharp Ashbaugh--Benguria bound, where the constant $4$ is the ratio for any finite interval (a one-dimensional version of the ball). We also characterize the graphs for which there is equality, which turn out to be a far larger category than just intervals;
    \item \label{item:general-indices} We more generally obtain bounds to eigenvalue ratios $\frac{\lambda_k (\Gamma)}{\lambda_j(\Gamma)}$ for compact Dirichlet trees $\Gamma$ and any $k > j$. %In particular, we will establish the existence of a constant $C(k,j)$ independent of $\Gamma$ such that $\frac{\lambda_k (\Gamma)}{\lambda_j(\Gamma)} \leq C(k,j)$, as well as study its dependence on $k$ and $j$; more c
    Concretely, for all compact Dirichlet trees we prove the universal inequality $\frac{\lambda_k(\Gamma)}{\lambda_j(\Gamma)} \leq 4 \cdot \frac{k^2}{j^2}$ for all $k > j \geq 1$ (see Theorem~\ref{thm:dirichlet-tree-2k-k} and Corollary~\ref{cor:dirichlet-tree-k-j});
    \item \label{item:general-graphs} We study the problem of bounding ratios $\frac{\lambda_k (\Gamma)}{\lambda_j(\Gamma)}$, $k > j$, for general compact graphs which may have some number $N \geq 0$ of Neumann leaves and some number $\beta \geq 0$ of independent cycles, proving that, at least for $k > j \geq N + \beta + 1$, the ratio can be bounded from above by a constant $C(k,j,N,\beta)$ independent of $\Gamma$ (in particular, the \emph{only} way that things can ``go wrong'' is via Neumann leaves and/or cycles), and studying in what classes of graphs maximizers and minimizers of such eigenvalue ratios can exist (see Theorems~\ref{thm:general-graph-k-j} and~\ref{thm:existence}, respectively).
\end{enumerate}

\subsection{A bound of Ashbaugh--Benguria-type}

We will first explore \autoref{item:ab-graph} of the list of goals. Since an interval is a one-dimensional version of a ball, the natural version of the Ashbaugh--Benguria bound (or PPW conjecture) for compact Dirichlet trees is as follows. As noted, the class of graphs for which there is equality is, however, much larger.

\begin{theorem}
\label{thm:ashbaugh-benguria}
    Let $\Gamma$ be a compact Dirichlet tree. Then
    \begin{equation}
    \label{eq:ab-conj-tree}
        \frac{\lambda_2 (\Gamma)}{\lambda_1 (\Gamma)} \leq 4,
    \end{equation}
    with equality if and only if $\Gamma$ is an interval or an equilateral star graph (on any number of edges)
\end{theorem}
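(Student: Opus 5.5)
We prove that $\lambda_2(\Gamma) \leq 4\lambda_1(\Gamma)$ by comparing $\lambda_2$ with a Dirichlet eigenvalue of a subgraph obtained by cutting the tree at a suitable point. The key facts are these. On a Dirichlet tree the first eigenvalue is simple and has a positive eigenfunction $\psi_1$. Moreover, $\lambda_1$ of a Dirichlet tree is bounded above by $\pi^2/(4 \ell^2)$ for suitable path lengths, and below via a symmetrization or Rayleigh argument: for a tree of total length $L$, the star-like estimate $\lambda_1(\Gamma) \geq \pi^2/(4 h^2)$ holds, where $h$ is the maximal distance from a point to the Dirichlet leaves. Concretely, let $\psi_1$ attain its maximum at a point $x_0$. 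For each leaf, the path from $x_0$ to that leaf has length at most $h := \max_{\text{leaf } v} \operatorname{dist}(x_0,v)$.

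\textbf{Lower bound for $\lambda_1$.} Take the geodesic from $x_0$ along which $\psi_1$ decreases. On a tree, following the eigenfunction outward from its maximum, the ``mass flux'' argument shows that $\lambda_1 \geq \pi^2/(4h^2)$. Equivalently, $\lambda_1(\Gamma)$ is at least the first eigenvalue of the interval $[0,h]$ with Neumann condition at $0$ and Dirichlet condition at $h$. I would prove this by a rearrangement onto the interval $[0,h]$ in the variable $t=\operatorname{dist}(x_0,\cdot)$. Define the weight $n(t)$ as the number of points at distance $t$. On a tree rooted at $x_0$ with Dirichlet leaves, $n$ is nonincreasing in $t$ once branching outward is accounted for. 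The argument then runs as follows.
\begin{enumerate}
\item Choose a better test: the function $\phi(x)=\cos(\pi t/(2h))$ is not in the domain if leaves sit at distance $<h$.
\item So instead use the Prüfer/ODE comparison edge by edge: along each outward edge, $\psi_1'/\psi_1$ satisfies a Riccati equation with Kirchhoff matching.
\item Deduce that on the edge ending at the farthest leaf, $\psi_1$ vanishes no later than the interval solution. This gives $\lambda_1 \geq \pi^2/(4h^2)$ only if derivatives sum correctly at vertices, which needs care.
\end{enumerate}

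\textbf{Upper bound for $\lambda_2$.} The second eigenvalue is bounded by the two-subgraph min–max principle. Cut $\Gamma$ at the point $y$ at distance $h/2$ from $x_0$ on a longest path, or more cleverly at a ``center''. This yields two Dirichlet subtrees, each of which contains a path of length at least $\ell$. Then
\[
\lambda_2(\Gamma) \leq \max_i \lambda_1(\Gamma_i).
\]
The right-hand side is controlled by test functions supported on the longest path of each piece, giving a bound of the form $\pi^2/\ell^2$. The aim is to pick the cut so that $\ell \ge h$ for both pieces. This matches the lower bound: $4\cdot \pi^2/(4h^2) = \pi^2/h^2$.

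\textbf{Choosing the cut.} Let $D$ be the diameter, i.e.\ the longest leaf-to-leaf path, and $m$ its midpoint. Every point is within $D/2$ of... no: every leaf is within $D/2$ of $m$. So choose $x_0=m$ for the lower bound. This requires the lower bound to hold around an arbitrary center, not just the maximum of $\psi_1$. I would reformulate it as $\lambda_1(\Gamma)\ge \pi^2/D^2$, the known Nicaise/Friedlander-type bound for Dirichlet trees. For the upper bound, split $\Gamma$ at $m$ into pieces containing the two halves of the diameter path. Each half has length $D/2$ and is a path from $m$ to a Dirichlet leaf, but with Dirichlet at $m$ too this gives only $\lambda_1(\text{piece})\le 4\pi^2/D^2$. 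That yields ratio $4$. Good: thus
\[
\lambda_2\le \frac{4\pi^2}{D^2} \quad\text{and}\quad \lambda_1\ge \frac{\pi^2}{D^2},
\]
so the ratio is at most $4$. The test function in the upper bound should be the sine on the half-path, extended by zero. It belongs to $H^1_0$ of the piece because the remaining branches are cut off, and Kirchhoff conditions are natural.

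\textbf{Equality.} Equality in the upper bound forces the sine test functions to be eigenfunctions. Equality in $\lambda_1\ge\pi^2/D^2$ forces $\Gamma$ to be an interval or a star with all leaves at distance $D/2$ from the centre, i.e.\ an equilateral star; this follows from the equality case of the lower bound. The main obstacle is establishing $\lambda_1(\Gamma)\ge \pi^2/D^2$ with its equality characterization. I would attempt it by symmetrization about $m$: map $x\mapsto \operatorname{dist}(x,m)$ and use the coarea formula and Pólya–Szegő on trees, comparing with the Neumann–Dirichlet interval $[0,D/2]$. Every point of $\Gamma$ having distance at most $D/2$ from $m$, with counting function at least $1$, yields the estimate, and the equality analysis forces each edge to reach distance exactly $D/2$.
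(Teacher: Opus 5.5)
Your lower bound is fine. $\lambda_1(\Gamma)\ge \pi^2/D^2$ does hold on Dirichlet trees, with equality exactly for intervals and equilateral stars, and it needs no Pólya--Szeg\H{o} argument. With $m$ the centre, $\phi(x)=\cos(\pi\,\mathrm{dist}(x,m)/D)$ is positive away from the leaves and solves $-\phi''=(\pi/D)^2\phi$ edgewise. It has zero outward derivative at $m$ and a nonnegative Kirchhoff defect at every other branch point, so pairing it with $\psi_1$ gives the bound. Equality forces all branching to occur at $m$ and all leaves to lie at distance $D/2$. The step $\lambda_2(\Gamma)\le\max_i\lambda_1(\Gamma_i)$ after imposing a Dirichlet condition at $m$ is also valid. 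The proof breaks at the claim $\lambda_1(\Gamma_i)\le 4\pi^2/D^2$, and hence at $\lambda_2(\Gamma)\le 4\pi^2/D^2$.

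\textbf{Why that claim fails.} Your test function, $\sin(2\pi s/D)$ along the half-diameter and zero on the side branches, is not in $H^1$. It is discontinuous at every interior point of the half-path where a side branch is attached. Kirchhoff conditions are natural, but continuity is part of the form domain.

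No admissible replacement exists, because side branches ending in Dirichlet leaves push eigenvalues up. Take $[-D/2,D/2]$ with Dirichlet ends and attach a Dirichlet pendant edge of length $\varepsilon<D/4$ at each of $\pm D/4$. The diameter is still $D$ and the centre is still $0$.
\begin{itemize}
\item The odd modes live on $[0,D/2]$ plus one pendant, with Dirichlet at $0$. There $\int|f'|^2\ge \frac{4\pi^2}{D^2}\int_0^{D/2}|f|^2+\frac{\pi^2}{4\varepsilon^2}\int_{\mathrm{pendant}}|f|^2$. Equality would force $f=c\sin(2\pi x/D)$ on $[0,D/2]$ and $f\equiv 0$ on the pendant, which is impossible by continuity at $D/4$.
\item The even modes beyond the first are at least $\min\{9\pi^2/D^2,\pi^2/(4\varepsilon^2)\}$, by decoupling the pendant.
\end{itemize}
Hence $\lambda_2>4\pi^2/D^2$. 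More drastically, a spine of length $D$ carrying many short Dirichlet legs has $\lambda_1D^2$, and hence $\lambda_2D^2$, as large as you like. So no upper bound on $\lambda_2$ in terms of the diameter can exist. Any scheme that sandwiches $\lambda_1$ and $\lambda_2$ between diameter quantities cannot work, and your equality analysis, which uses the same test functions, falls with it.

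\textbf{The missing idea.} You have to compare $\lambda_2$ and $\lambda_1$ of the same graph directly, not through a geometric quantity. The paper cuts at a point $v$ where $\psi_1$ attains its maximum, not at the metric centre. Then $\psi_1|_{\Gamma_i}$ is the ground state of each piece with a Neumann condition at $v$, i.e.\ $\tau_1(\Gamma_i)=\lambda_1(\Gamma)$. Combined with $\lambda_2(\Gamma)\le\max_i\lambda_1(\Gamma_i)$, everything reduces to the single-leaf Dirichlet-to-Neumann bound $\lambda_1(\Gamma_i)\le 4\tau_1(\Gamma_i)$ of Theorem~\ref{thm:vc}. Side branches now raise both sides together, which is why this works where the diameter fails.

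Theorem~\ref{thm:vc} is the real content. It is proved by edge-length optimization. At an interior critical point the two ground states have equal amplitudes, and pendant stars are equilateral with $k_D+k_N=\pi/\ell$. Replacing such a star by a suitable interval keeps $\lambda_1$ and strictly lowers $\tau_1$. The equality case is then read off from equality in Theorem~\ref{thm:vc} on each piece. Your proposal has no substitute for this step.
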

Vertices of degree two are understood to be suppressed in all theorems of this article, although such ``dummy vertices'' will be introduced for convenience in certain arguments.  We note nonetheless that an equilateral star graph on two edges would reduce to an interval with a dummy vertex, and could be included in this instance.

We will prove this theorem in Section~\ref{sec:ab-dirichlet-trees} via a completely different method from the symmetrization approach taken by Ashbaugh and Benguria. In fact, Theorem~\ref{thm:ashbaugh-benguria} will be quite a direct consequence of the following inequality of some independent interest, which bounds the effect of changing a Dirichlet condition to a Neumann one at a single vertex.

In what follows, for a tree $\Gamma$, we will denote by $\tau_1(\Gamma)$ the first eigenvalue of the Laplacian on $\Gamma$ with Neumann conditions at a single leaf, Dirichlet conditions at all remaining leaves, and Kirchhoff conditions at all other vertices. That is, in passing from $\lambda_1$ to $\tau_1$ we have changed, at an arbitrary leaf not specified in the notation, a single Dirichlet condition to a Neumann condition.

\begin{theorem}
\label{thm:vc}
Let $\Gamma$ be a compact tree. Then, with the above notation,
\begin{equation}
\label{eq:vc-change}
    \frac{\lambda_1(\Gamma)}{\tau_1(\Gamma)} \leq 4,
\end{equation}
with equality if and only if $\Gamma$ is an interval.
\end{theorem}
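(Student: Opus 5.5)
The plan is to build a test function for $\lambda_1(\Gamma)$ out of the eigenfunction for $\tau_1(\Gamma)$, modelled on the interval. For $\Gamma=[0,L]$ with the Neumann condition at $0$, the $\tau_1$-eigenfunction is $\psi=\cos(\pi x/2L)$. The Dirichlet eigenfunction is $\sin(\pi x/L)=2\psi\sqrt{1-\psi^2}$. So on a general tree, with $v$ the Neumann leaf and $\psi>0$ the $\tau_1$-eigenfunction, I would take
\[
u=\psi\,w,\qquad w=\sqrt{1-\psi^2},
\]
after normalising $\psi(v)=1$. This $u$ vanishes at every Dirichlet leaf because $\psi$ does. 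It also vanishes at $v$, provided $\psi$ attains its maximum value $1$ exactly at $v$.

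The first step is a structural lemma: $\psi$ is strictly decreasing along every path leading away from $v$. To prove it, root the tree at $v$ and argue by induction from the Dirichlet leaves inwards. Each subtree hanging below a vertex carries positive mass $\int\psi$, so by integrating $-\psi''=\tau_1\psi$ over that subtree, the outward derivative at its root is negative. This gives $0<\psi<1$ away from $v$ and makes $u$ admissible.

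The second step is a pointwise gradient bound. Set $E=\psi'^2+\tau_1\psi^2$. This quantity is constant along each edge, since $E'=2\psi'(\psi''+\tau_1\psi)=0$. At an interior vertex, Kirchhoff's condition says the single incoming $|\psi'|$ equals the sum of the outgoing ones, so each outgoing $|\psi'|$ is no larger. Hence $E$ does not increase as we move away from $v$. Since $E(v)=\tau_1$, we get
\[
\psi'^2\le \tau_1(1-\psi^2)\quad\text{on }\Gamma,
\]
with equality everywhere only if there is no branching, that is, only if $\Gamma$ is an interval.

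The third step uses the ground-state substitution. Integrating by parts edgewise, with boundary terms vanishing because $\psi=0$ at the Dirichlet leaves, $\psi'(v)=0$, and Kirchhoff holds at interior vertices, we get
\[
\int_\Gamma|u'|^2=\int_\Gamma\psi^2 w'^2+\tau_1\int_\Gamma\psi^2w^2 .
\]
Since $w'^2=\psi^2\psi'^2/(1-\psi^2)\le\tau_1\psi^2$ by the second step, $\lambda_1\le 4\tau_1$ reduces to
\[
4\int_\Gamma\psi^4\le 3\int_\Gamma\psi^2 .
\]
This holds with equality on the interval, where it is $4\cdot\tfrac38=3\cdot\tfrac12$. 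I expect this last inequality to be the main obstacle.

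My first attempt at it would use the coarea formula, which the monotonicity of the first step makes available: $dx=\sum_{\psi(x)=t}|\psi'(x)|^{-1}\,dt$. Let $n(t)$ be the number of points at level $t$; it is nonincreasing in $t$. Compare the resulting measure on $[0,1]$ with the interval's measure $dt/\sqrt{\tau_1(1-t^2)}$. The integrand $4t^4-3t^2$ changes sign exactly once, at $t=\sqrt3/2$, and has zero interval-average. So it suffices that the density ratio be nonincreasing in $t$, by a Chebyshev-type single-crossing argument.

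The factor $n(t)$ has the right monotonicity. The factor $\sqrt{\tau_1(1-t^2)}/|\psi'|$ is where care is needed, since $E$ drops across vertices. If monotonicity fails, I would fall back on choosing $w$ more flexibly. One option is $w=\sqrt{1-\psi^2}$ along the main branch only, with separate rescalings on each branch after a vertex. Another is to bound $\int\psi^2w'^2$ by the exact identity $\int\psi'^2\psi^4/(1-\psi^2)$, using the flux identity $\sum_{\psi=t}|\psi'|=\tau_1\int_{\{\psi>t\}}\psi$ instead of the crude pointwise bound. Finally, equality forces $E\equiv\tau_1$, hence no branching, so $\Gamma$ is an interval; conversely, the interval gives ratio exactly $4$.
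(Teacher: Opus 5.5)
Your test-function route is entirely different from the paper's, but as written it has two genuine gaps.

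\textbf{Step 1 is false.} Your argument for it does not work. Integrating $-\psi''=\tau_1\psi$ over the subtree $T$ hanging below a point $x$ gives $\partial_T\psi(x)=\tau_1\int_T\psi-\sum_p|\psi'(p)|$. Here $\partial_T$ is the derivative pointing into $T$, and $p$ runs over the Dirichlet leaves of $T$. The positive mass enters with the \emph{wrong} sign, and nothing controls the leaf fluxes.

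A concrete counterexample: take a star with centre $o$, a Neumann edge of length $1$, and Dirichlet edges of lengths $\varepsilon$ and $3$. With $k=\sqrt{\tau_1}$, the secular equation is $\tan k=\cot(k\varepsilon)+\cot(3k)$. Its smallest root has $\pi-3k=O(\varepsilon)$, so the ground state is concentrated on the long edge, and $\psi(v)\approx 2\psi(o)=O(\varepsilon)\max\psi$. After normalising $\psi(v)=1$, the weight $\sqrt{1-\psi^2}$ is therefore undefined. Step 2 fails as well: the outgoing derivatives at $o$ have mixed signs, and $|\psi'|\approx\psi(o)/\varepsilon$ on the short edge, so $E$ jumps up.

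This can be repaired by a reduction you do not make. If $\psi$ is not monotone, choose a local maximum $q\neq v$ (by Kirchhoff, $\psi'=0$ there in every direction) such that some component $C$ of $\Gamma\setminus\{q\}$ not containing $v$ contains no further local maximum. Then:
\begin{itemize}
\item $\psi|_{\overline C}$ is decreasing away from $q$;
\item $\tau_1(\overline C)=\tau_1(\Gamma)$, since $\psi|_{\overline C}$ is a positive eigenfunction with a Neumann condition at $q$;
\item $\lambda_1(\overline C)>\lambda_1(\Gamma)$ (Dirichlet condition at $q$) by domain monotonicity.
\end{itemize}
So it suffices to treat monotone trees.

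\textbf{The key inequality is left open, and your proposed argument for it fails.} The decisive inequality $4\int_\Gamma\psi^4\le 3\int_\Gamma\psi^2$ is not proved. The single-crossing argument you propose fails even on monotone trees. On an edge with energy $E_e<\tau_1$, the summand $\sqrt{\tau_1(1-t^2)/(E_e-\tau_1t^2)}$ of $h=\rho/\rho_I$ is strictly \emph{increasing} in $t$, so $h$ is never nonincreasing unless $\Gamma$ is an interval.

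For example, take a star with a Neumann edge of length $a$, $\sqrt{\tau_1}\,a<\pi/6$, and $n\ge 2$ equal Dirichlet edges. Then $h\equiv 1$ on $(\psi(o),1)$ with $\psi(o)>\sqrt3/2$. But $h\to n^2$ as $t\uparrow\psi(o)$, while $h(0^+)<2n/\sqrt3$. So even the weaker single-crossing condition at $\sqrt3/2$ fails.

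What does work is to integrate once more. Set $F(t)=\int_{\{\psi>t\}}\psi$. In the monotone case, $F'=-t\rho$ and the flux identity reads $\tau_1F=\sum_{\psi=t}|\psi'|$. Combined with $|\psi'|\le\sqrt{\tau_1(1-t^2)}$, this gives $F\le(1-t^2)\rho$. Equivalently, $\frac{d}{dt}\frac{\tau_1F^2}{1-t^2}\le 0$, so $F/F_I$ is nonincreasing, where $F_I(t)=\sqrt{(1-t^2)/\tau_1}$. Integrating by parts, $\int_\Gamma(4\psi^4-3\psi^2)=\int_0^1(12t^2-3)F(t)\,dt$. The weight $12t^2-3$ changes sign only at $t=\tfrac12$, and the integral vanishes for $F=F_I$. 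The single-crossing argument now closes, and your equality discussion goes through.

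With both repairs this would be a genuinely different, direct proof. The paper instead:
\begin{itemize}
\item maximises $\lambda_1/\tau_1$ over edge lengths for a fixed topology;
\item uses the Hadamard formula to force equal amplitudes and equal pendant-star leaves with $k_D+k_N=\pi/\ell$;
\item replaces the pendant star by an interval, which keeps $\lambda_1$ and strictly lowers $\tau_1$, contradicting maximality;
\item inducts on the number of edges.
\end{itemize}
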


The proof of Theorem~\ref{thm:vc}, also given in Section~\ref{sec:ab-dirichlet-trees} (although some of the technical parts of the proof are in Appendix~\ref{sec:appendix}), adapts a kind of optimization argument similar to one sometimes used for shape optimization of eigenvalues on domains and manifolds. We will consider the effect of edge length perturbations of $\Gamma$ on the eigenvalue ratio in \eqref{eq:vc-change}, more precisely showing that, for any fixed graph topology (or ``underlying discrete graph'') other than an interval, no critical point of the functional $\frac{\lambda_1(\cdot)}{\tau_1(\cdot)}$ can be a maximum if all edge lengths are positive. On the other hand, as will be fairly easy to show via a compactness argument, this ratio must attain a maximum and a minimum among all quantum graphs with the same underlying discrete graph, if edges of length zero are allowed. Hence the only possibility for the maximum will be a degeneracy, which will allow us to conclude the bound via an induction over the number of edges of the graph.

To the best of our knowledge this approach is new in the context of quantum graphs, although the principle of maximizing or minimizing eigenvalues of quantum graphs under edge length perturbations for a fixed graph topology is well established, having been introduced and thoroughly studied in \cite{BL17} in the fundamental case of the first nontrivial eigenvalue of the Laplacian with Kirchhoff conditions.

\subsection{General eigenvalue ratios for Dirichlet trees}

Regarding \autoref{item:general-indices} from our list of goals, our main result is as follows.

\begin{theorem}
\label{thm:dirichlet-tree-2k-k}
    Let $\Gamma$ be any compact Dirichlet tree. Then, for any $k \geq 1$,
    \begin{equation}
    \label{eq:dirichlet-tree-2k-k}
        \frac{\lambda_{2k} (\Gamma)}{\lambda_k (\Gamma)} \leq 4.
    \end{equation}
\end{theorem}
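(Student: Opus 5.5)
Our plan is to reduce Theorem~\ref{thm:dirichlet-tree-2k-k} to Theorem~\ref{thm:ashbaugh-benguria} through a partition argument built from the nodal structure of eigenfunctions on trees. Let $\psi$ be an eigenfunction for $\lambda_{2k}(\Gamma)$. The natural objects are nodal domains, or more flexibly a partition of $\Gamma$ into $k$ subtrees $\Gamma_1,\dots,\Gamma_k$ obtained by cutting at points. Each $\Gamma_i$ carries Dirichlet conditions at its cut points and at the original leaves, so each $\Gamma_i$ is again a compact Dirichlet tree.

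The key steps are as follows.

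\textbf{Step 1.} Choose $k$ cut points (or nodal points) so that $\Gamma$ splits into $k$ Dirichlet subtrees $\Gamma_i$ with $\lambda_1(\Gamma_i)\le \lambda_k(\Gamma)$ for every $i$. A natural candidate is to take nodal domains of an eigenfunction for $\lambda_k$. On a tree, a generic eigenfunction for $\lambda_k$ has exactly $k$ nodal domains (Pokornyi--Pryadko--Schapotschnikow, Band et al.). Its restriction to each domain is then the first Dirichlet eigenfunction there, so $\lambda_1(\Gamma_i)=\lambda_k(\Gamma)$. To handle non-generic cases, including eigenfunctions vanishing at vertices and multiple eigenvalues, we would perturb edge lengths. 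Continuity of the eigenvalues then lets us pass to the limit, since the inequality is closed.

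\textbf{Step 2.} Apply Theorem~\ref{thm:ashbaugh-benguria} on each piece: $\lambda_2(\Gamma_i)\le 4\lambda_1(\Gamma_i)=4\lambda_k(\Gamma)$.

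\textbf{Step 3.} Conclude by Dirichlet bracketing. Imposing Dirichlet conditions at the $k-1$ cut points only raises eigenvalues. The spectrum of the decoupled graph $\bigsqcup_i \Gamma_i$ contains the $2k$ values $\lambda_1(\Gamma_i),\lambda_2(\Gamma_i)$, $i=1,\dots,k$, all at most $4\lambda_k(\Gamma)$. Min-max then gives $\lambda_{2k}(\Gamma)\le \lambda_{2k}(\bigsqcup_i\Gamma_i)\le 4\lambda_k(\Gamma)$.

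Concretely, the test space is the span of the first two eigenfunctions of each $\Gamma_i$, extended by zero. This space has dimension $2k$, lies in $H^1_0$ of $\Gamma$, and has Rayleigh quotient at most $4\lambda_k(\Gamma)$.

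\textbf{Main obstacle.} The main difficulty is Step 1 in full generality. We need exactly $k$ domains, each a Dirichlet tree with $\lambda_1(\Gamma_i)\le\lambda_k(\Gamma)$, even when $\lambda_k$ is degenerate or the eigenfunction vanishes on whole subtrees (which happens on trees with symmetric branches).

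Our first attempt would bypass nodal theory entirely. We would take the partition minimizing $\max_i \lambda_1(\Gamma_i)$ over $k$-partitions into subtrees obtained by point cuts. It is known that this spectral minimal partition energy is at most $\lambda_k(\Gamma)$ on trees, because a generic $\lambda_k$-eigenfunction is Courant-sharp. Failing that, we would argue by genericity: perturb edge lengths so that all eigenvalues are simple and no eigenfunction vanishes at a vertex, run the argument there, and let the perturbation tend to zero.
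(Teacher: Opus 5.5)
Your proposal is correct and is essentially the paper's proof. The paper takes the $k$ nodal domains of a $\lambda_k$-eigenfunction, so that $\lambda_1(\Gamma_i)=\lambda_k(\Gamma)$, and bounds $\lambda_{2k}(\Gamma)\le\lambda_{2k}(\bigsqcup_i\Gamma_i)\le\max_i\lambda_2(\Gamma_i)\le 4\lambda_k(\Gamma)$ via Dirichlet bracketing and Theorem~\ref{thm:ashbaugh-benguria}, removing the genericity assumption by edge-length perturbation (citing Band's genericity result) and continuity of eigenvalues; your only slips are cosmetic (the partition uses $k-1$ cut points, not $k$, and the opening mention of a $\lambda_{2k}$-eigenfunction plays no role).
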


The inequality \eqref{eq:dirichlet-tree-2k-k} is sharp; for any given $k \geq 1$ there is equality if $\Gamma$ is an equilateral star on $2k$ edges (or if $\Gamma$ is an interval), cf.\ also \cite[Section 15, open problem (iii)]{AB07}.  The inequality can also be interpolated to cover any pair of eigenvalues, and also implies a version, for Dirichlet trees, of many results for eigenvalue ratios, the analogues of which remain conjectures on domains.

\begin{corollary}
\label{cor:dirichlet-tree-k-j}
    Let $\Gamma$ be any compact Dirichlet tree. Then for any $k > j \geq 1$ we have
    \begin{equation}
        \label{eq:ab-kj}
        \frac{\lambda_k (\Gamma)}{\lambda_j (\Gamma)} 
        \leq 4^{\lceil\log_2 \frac{k}{j}\rceil} \leq 4\cdot\frac{k^2}{j^2}.
    \end{equation}
\end{corollary}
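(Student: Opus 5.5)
We deduce the corollary from Theorem~\ref{thm:dirichlet-tree-2k-k} by iterating it and using monotonicity of the eigenvalues in the index.

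Fix $k > j \geq 1$ and set $m := \lceil \log_2 \frac{k}{j} \rceil \geq 1$, so that $k \leq 2^m j$.

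\textbf{Step 1 (monotonicity).} The eigenvalues are listed in nondecreasing order, so $\lambda_k(\Gamma) \leq \lambda_{2^m j}(\Gamma)$.

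\textbf{Step 2 (telescoping).} Write
\begin{displaymath}
    \frac{\lambda_{2^m j}(\Gamma)}{\lambda_j(\Gamma)} = \prod_{i=0}^{m-1} \frac{\lambda_{2^{i+1}j}(\Gamma)}{\lambda_{2^i j}(\Gamma)} .
\end{displaymath}
All eigenvalues are strictly positive because $\Gamma$ has at least one Dirichlet leaf, so every quotient is well defined. Apply Theorem~\ref{thm:dirichlet-tree-2k-k} to each factor, with index $2^i j$ in place of $k$. Each factor is then at most $4$, and we obtain the first inequality $\lambda_k/\lambda_j \leq 4^m$.

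\textbf{Step 3 (elementary estimate).} By the definition of the ceiling, $m < \log_2 \frac{k}{j} + 1$. Hence
\begin{displaymath}
    2^m < 2\cdot\frac{k}{j}, \qquad\text{and so}\qquad 4^m = (2^m)^2 < 4\cdot\frac{k^2}{j^2}.
\end{displaymath}
This gives the second inequality.

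There is no real obstacle: all the substance is in Theorem~\ref{thm:dirichlet-tree-2k-k}. The only point needing care is that the doubled index $2^m j$ may exceed $k$, and this is handled by the monotonicity in Step 1, applied before telescoping.
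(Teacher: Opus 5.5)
Your proof is correct and follows the paper's argument. You use monotonicity to pass from $\lambda_k$ to $\lambda_{2^m j}$, apply Theorem~\ref{thm:dirichlet-tree-2k-k} $m$ times, and then use the elementary bound $\lceil \log_2 x\rceil < \log_2 x + 1$, which the paper phrases equivalently as $\lceil\log_2 x\rceil \leq \log_2(2x)$.
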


Theorem~\ref{thm:dirichlet-tree-2k-k} and Corollary~\ref{cor:dirichlet-tree-k-j} will be proved in Section~\ref{sec:general-dirichlet-trees}.

\begin{remark}
\label{rem:big-ab}
\begin{enumerate}
    \item We note that by iterating Nicaise' result, Theorem~\ref{thm:nicaise}, we can already deduce, for any $k > j$, the existence of an absolute constant $C(k,j) > 0$ such that
    \begin{equation}
    \label{eq:nicaise-kj}
        \frac{\lambda_k (\Gamma)}{\lambda_j (\Gamma)} \leq C(k,j);
    \end{equation}
    namely, we could take $C(k,j) := 5^{k-j}$. However, this bound grows exponentially in $k-j$, whereas in \eqref{eq:ab-kj}, up to a constant, the bound respects the Weyl asymptotics $\lambda_k (\Gamma) \sim \frac{\pi^2 k^2}{L^2}$, where $L = |\Gamma|$ is the total length of $\Gamma$.

    \item On the other hand, it is not clear whether the constant $4$ in \eqref{eq:ab-kj} is optimal, and we leave this as an open problem. Theorem~\ref{thm:ashbaugh-benguria} states that for the specific pair $k=2$, $j=1$ one can replace $4$ by $1$. On the other hand, if $\Gamma$ is an equilateral star on $m$ edges, then $\lambda_m (\Gamma) = \frac{\pi^2m^2}{|\Gamma|^2}$, $\lambda_{m+1} (\Gamma) = \frac{9\pi^2 m^2}{4|\Gamma|^2}$. 
    (Indeed, if the graph is normalized such that each edge has length $\frac{|\Gamma|}{m} = \pi$, then the first eigenfunction has the form $\psi_1(x) = \cos \frac{x}{2}$ on each edge $e \sim [0,\pi]$, the second up to the $m$-th take the form $\psi(x) = \pm \sin x$, or $0$, on each edge, and the ($m+1$)-st eigenfunction, unique up to scalar multiples, will be $\psi_{m+1}(x) = \cos(\frac{3x}{2})$ on each edge.) It follows that
    \begin{displaymath}
        \frac{\lambda_{m+1}(\Gamma)}{\lambda_m (\Gamma)} = \frac{9}{4} = \left(\frac{9m^2}{4(m+1)^2}\right) \cdot \frac{(m+1)^2}{m^2}.
    \end{displaymath}
    Since $m$ may be taken as large as we please, the best possible factor $C$ in the inequality $\frac{\lambda_k(\Gamma)}{\lambda_j (\Gamma)} \leq C \cdot \frac{k^2}{j^2}$ independent of $k$, $j$ and $\Gamma$ must be at least $\frac{9}{4}$. In particular, the ``worst-case scenario'' (the maximal value) is larger than the asymptotic value for any fixed graph coming from the Weyl asymptotics, that is, that $\frac{j^2\lambda_k(\Gamma)}{k^2\lambda_j(\Gamma)} \to 1$ as $k,j \to \infty$, for $\Gamma$ fixed.
    
    A heuristic argument suggests that $4$ may not be optimal: On an $m$-equilateral star, the ratio $\frac{j^2\lambda_k}{k^2\lambda_j}$ appears to attain its maximum precisely when $k=m+1$, $j=m$, and we also note that these equilateral star graphs are extremal in several relevant ways (including the case $k=2$, $j=1$, but also as maximizers of the fundamental spectral gap of the Laplacian with all standard vertex conditions among all graphs of fixed length and fixed number of edges \cite[Theorem~4.2]{KKMM16}). So it seems quite plausible that these graphs could also be maximizers (in the limit as $m \to \infty$) in this case as well, meaning that the correct factor in \eqref{eq:ab-kj} would be $\frac{9}{4}$.
    
    \item
    Finally, we note that Theorem~\ref{thm:dirichlet-tree-2k-k} immediately implies versions for Dirichlet trees of several conjectures for eigenvalue ratios on domains (see \cite[Section~8.6]{A17} or \cite[Section~4]{A00}), in particular, for any $m \geq 1$,
    \begin{equation}
        \label{eq:ab-b1}
        \frac{\lambda_{m+1}(\Gamma)}{\lambda_m(\Gamma)} \leq \max_\Gamma \frac{\lambda_2(\Gamma)}{\lambda_1(\Gamma)} = 4,
    \end{equation}
    the value for an interval. This is also better than the previous best known result due to Nicaise; see Theorem~\ref{thm:nicaise}.
    It is also the best possible absolute estimate independent of $m$, since it is sharp for the interval and $m=1$. 
\end{enumerate}
\end{remark}

%At any rate, even if the constant in \eqref{eq:ab-kj} turns out not to be sharp, we thus see that, at least on Dirichlet trees, the eigenvalues obey a very strong set of bounds. This is to be compared with what is known on domains, where $\lambda_4/\lambda_2$ satisfies the same bound as $\lambda_2/\lambda_1$ (another result of Ashbaugh and Benguria, see \cite[Section~8.6]{A17}), but similar bounds for ratios of arbitrary higher eigenvalues are, to the best of our knowledge, unknown (although, again, see \cite[Section~8.6]{A17} for an extensive discussion of bounds of the form $\lambda_{k+1}/\lambda_k$ and $\lambda_k/\lambda_1$ on domains, with references).

At any rate, even if the constant in \eqref{eq:ab-kj} turns out not to be sharp, we thus see that, at least on Dirichlet trees, the eigenvalues obey a very strong set of bounds, much stronger than what is known on domains. Nevertheless, the key bound \eqref{eq:dirichlet-tree-2k-k} does have \emph{an} analogue of sorts on domains, namely for the ratio of eigenvalues whose eigenfunctions have, respectively, $2k$ and $k$ nodal domains, which in general will not be $\lambda_{2k}$ and $\lambda_k$. Our proof will exploit the property that, after an arbitrarily small perturbation of the edge lengths of $\Gamma$ (a perturbation under which the eigenvalues are stable), for all $j \geq 1$ the $j$-th eigenfunction of $\Gamma$ has exactly $j$ nodal domains (cf.\ \cite{B07,B08}), together with the Ashbaugh--Benguria bound of Theorem~\ref{thm:ashbaugh-benguria}.

\begin{remark}
For some purposes Dirichlet trees are natural analogues of Dirichlet Laplacians on domains. In this context we observe that \cite[Theorem~4.7]{BKKM17} contains the following bound as a special case: for any compact Dirichlet tree $\Gamma$ of total length $L > 0$ and any $k \geq 1$,
    \begin{equation}
    \label{eq:dirichlet-tree-polya}
        \lambda_k (\Gamma) \geq \frac{\pi^2 k^2}{L^2}.
    \end{equation}
This is a version of P\'olya's conjecture for Dirichlet trees: for any Dirichlet tree $\Gamma$ and any $k \geq 1$, the principal term in the Weyl asymptotics, $\frac{\pi^2 k^2}{L^2}$, \emph{always} gives a lower bound on $\lambda_k(\Gamma)$.  Compare with Friedlander's bound of $\frac{\pi^2 k^2}{4L^2}$ for the $(k+1)$-st eigenvalue of the Laplacian with Kirchhoff conditions \cite{F05}, but equally valid for the $k$-th eigenvalue in the presence of at least one Dirichlet condition.

Actually, \cite[Theorem~4.7]{BKKM17} is a more general result, and the special inequality \eqref{eq:dirichlet-tree-polya} went unnoticed there. We also note that the case of equality has not been investigated, something that we will not do here; but we expect that there is equality (for a given pair of a graph $\Gamma$ and an index $k$) if and only if every edge of $\Gamma$ has length equal to a multiple of $\frac{L}{k}$. (Certainly, the ``if'' direction is immediate, and the ``only if'' direction is not hard to show for $k=1,2$.)
\end{remark}

Eq. \eqref{eq:dirichlet-tree-polya} is significant for an additional reason beyond its relation to the Weyl asymptotics: The sharper bound for mixed Dirichlet and Kirchhoff conditions found in \cite[Theorem~4.7]{BKKM17}, of which \eqref{eq:dirichlet-tree-polya} is a special case, reads
\begin{equation}
\label{eq:bkkm-lower}
    \lambda_k (\Gamma) \geq \left(k - \frac{N+\beta}{2}\right)^2 \frac{\pi^2}{L^2}
\end{equation}
for $k \geq 2$ (or $k \geq 1$ if there is at least one Dirichlet vertex), where, as before, $N$ is the number of Neumann leaves and $\beta$ the number of independent cycles (for \eqref{eq:dirichlet-tree-polya} just take $N=\beta=0$). As with the case of eigenvalue ratios, it is notable that \emph{only} these two quantities are responsible for ``deviations'' from the ``baseline'' lower bound coming from the Weyl asymptotics. It would be interesting to explore whether there are other senses in which Dirichlet trees are, as here, a natural analogue of Dirichlet Laplacians on Euclidean domains.

\subsection{Bounds and existence of maximizers for general graphs}

We now turn to \autoref{item:general-graphs}, and assume that $\Gamma$ may have cycles and may have any mix of Dirichlet and Kirchhoff conditions at its leaves, if it has any. As sketched above, it is known that no bound of the form \eqref{eq:ab-graph}, and thus no bound of the form \eqref{eq:nicaise-kj}, is possible for a constant $C(k,j)$ independent of the graph. However, we can use an upper bound from \cite[Theorem~4.9]{BKKM17} originally due to Ariturk \cite{A16}, namely
\begin{displaymath}
    \lambda_k (\Gamma) \leq \frac{\pi^2 (k-2+\beta+D+\frac{N+\beta}{2})^2}{L^2}
\end{displaymath}
for any $k\geq 1$, where as before $N$ is the number of Neumann leaves, $\beta$ is the number of independent cycles, and $D$ is the number of Dirichlet leaves, together with  the Faber--Krahn-type lower bound of Nicaise \cite[Th\'eor\`eme~3.1]{N87}, which shows that any nonzero eigenvalue is at least $\frac{\pi^2}{4L^2}$, to obtain that
\begin{equation}
    \label{eq:general-graph-dnb}
    \frac{\lambda_k (\Gamma)}{\lambda_j (\Gamma)} \leq 4\left(k-2+\beta+D+\frac{N+\beta}{2}\right)^2 =: C(k,j,N,D,\beta)
\end{equation}
for any $k > j \geq 1$. Of course this bound is likely to be extremely poor in general; among other things, by estimating $\lambda_j$ from below by $\lambda_1$ we are suppressing the dependence on $j$. But the point is that, for any pair $(k,j)$, there \emph{exists} an upper bound depending (at most) on this pair of indices, as well as $N$, $D$ and $\beta$. However, as a consequence of Corollary~\ref{cor:dirichlet-tree-k-j} (or, indeed, \eqref{eq:nicaise-kj}) and a simple \emph{surgery principle} (namely interlacing inequalities for eigenvalues under changing vertex conditions, see \cite[Theorem~3.4]{BKKM19}), we can easily suppress the dependence of the constant on $D$ (and generally improve the dependence on $j$), to obtain the following:

\begin{theorem}
    \label{thm:general-graph-k-j}
    Let $\Gamma$ be a compact graph equipped with any combination of Neumann, Kirchhoff, and Dirichlet conditions, and let $k > j \geq N + \beta + 1$. %Then there exists a constant $C=C(k,j,N,\beta)$ depending only on the number of Neumann leaves, $N$, and the number of independent cycles, $\beta$, in addition to the indices $k$ and $j$, such that
    %\begin{displaymath}
    %    \frac{\lambda_k (\Gamma)}{\lambda_j (\Gamma)} \leq C(k,j,N,\beta).
    %\end{displaymath}
    Then
    \begin{displaymath}
        \frac{\lambda_k (\Gamma)}{\lambda_j (\Gamma)} \leq 4\cdot \frac{k^2}{(j-(N+\beta))^2}
    \end{displaymath}
\end{theorem}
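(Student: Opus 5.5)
The plan is to reduce to a Dirichlet tree by surgery, then apply Corollary~\ref{cor:dirichlet-tree-k-j}. We construct a compact Dirichlet tree $\tilde\Gamma$ from $\Gamma$ by two kinds of operations, each of which changes the vertex conditions at a single point:
\begin{enumerate}
\item turning each of the $N$ Neumann leaves into a Dirichlet leaf;
\item cutting one point on each of $\beta$ edges chosen to break all independent cycles, and imposing Dirichlet conditions at the two new leaves created by each cut.
\end{enumerate}
Each operation amounts to a rank-one perturbation of the quadratic form domain, namely imposing a single Dirichlet condition at one point; a cut followed by Dirichlet conditions on both sides is the same as imposing a Dirichlet condition at the cut point. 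By the surgery principle \cite[Theorem~3.4]{BKKM19}, imposing a Dirichlet condition at one point satisfies
\[
\lambda_i(\Gamma)\le\lambda_i(\Gamma')\le\lambda_{i+1}(\Gamma)
\]
for the eigenvalues of the modified graph $\Gamma'$.

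Doing $N+\beta$ such operations yields a graph $\tilde\Gamma$ with no cycles, no Neumann leaves, and Dirichlet conditions at all leaves. Any preexisting Dirichlet vertices of degree at least two can be handled by cutting them into several Dirichlet leaves, which does not change the spectrum. The result is a finite disjoint union of compact Dirichlet trees $T_1,\dots,T_r$ with total length $L$. Iterating the interlacing gives, for all $i$,
\[
\lambda_i(\Gamma)\le \lambda_i(\tilde\Gamma)\le \lambda_{i+N+\beta}(\Gamma).
\]
Setting $m=j-(N+\beta)\ge1$, we therefore get $\lambda_k(\Gamma)\le\lambda_k(\tilde\Gamma)$ and $\lambda_j(\Gamma)\ge \lambda_m(\tilde\Gamma)$. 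It remains to show
\[
\frac{\lambda_k(\tilde\Gamma)}{\lambda_m(\tilde\Gamma)}\le 4\frac{k^2}{m^2}.
\]

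The main obstacle is that $\tilde\Gamma$ may be disconnected, so Corollary~\ref{cor:dirichlet-tree-k-j} does not apply directly. We would first try to avoid disconnection: choose the cut points as interior points of edges not in a spanning tree. A cut at such a point leaves the graph connected, and the Neumann-to-Dirichlet changes at leaves never disconnect anything. Hence $\tilde\Gamma$ is a connected compact Dirichlet tree. The leaves created by cuts are degree-one vertices carrying Dirichlet conditions, which is exactly what the definition requires.

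Corollary~\ref{cor:dirichlet-tree-k-j} then gives
\[
\frac{\lambda_k(\tilde\Gamma)}{\lambda_m(\tilde\Gamma)}\le 4\frac{k^2}{m^2},
\]
which combined with the interlacing yields the claim. If one prefers not to rely on this choice of cuts, a fallback is an argument for unions of trees. The spectrum of $\tilde\Gamma$ is the union of the spectra of the $T_l$, and applying the $2k$-versus-$k$ bound of Theorem~\ref{thm:dirichlet-tree-2k-k} componentwise shows that the counting functions satisfy $n_{\tilde\Gamma}(4\mu)\ge 2n_{\tilde\Gamma}(\mu)$ roughly. This would again yield $\lambda_{2i}(\tilde\Gamma)\le 4\lambda_i(\tilde\Gamma)$, since for $\mu=\lambda_i(\tilde\Gamma)$ the components' counts at $4\mu$ at least double. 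The connected-cut choice is simpler, however, and is the route we would take.
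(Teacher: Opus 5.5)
Your proposal is correct and is essentially the paper's proof. You change the $N$ Neumann leaves to Dirichlet ones and impose Dirichlet conditions at $\beta$ cycle-breaking interior points, use rank-one interlacing to get $\lambda_{j-N-\beta}(\widetilde\Gamma)\le\lambda_j(\Gamma)$ and $\lambda_k(\Gamma)\le\lambda_k(\widetilde\Gamma)$, and apply Corollary~\ref{cor:dirichlet-tree-k-j} to the Dirichlet tree $\widetilde\Gamma$.

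One remark: choosing the cuts off a spanning tree does not prevent the effective disconnection caused by pre-existing Dirichlet vertices of degree at least two, a case the paper does not treat explicitly. In that case your componentwise fallback is what is needed, and it is exact rather than ``rough''. The inequality $n_{\widetilde\Gamma}(4\mu)\ge 2n_{\widetilde\Gamma}(\mu)$ follows directly from Theorem~\ref{thm:dirichlet-tree-2k-k} on each component, and it iterates to give the corollary for disjoint unions.
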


Together, this result and Corollary~\ref{cor:dirichlet-tree-k-j} give an essentially complete general answer to what is driving the existence of quantum graph counterexamples to bounds of Ashbaugh--Benguria (or PPW) type for ratios of eigenvalues. That said, Theorem~\ref{thm:general-graph-k-j} should also be true (with an adjusted upper bound $C(k,j,N,\beta)$) for all $j \leq N + \beta$ for which $\lambda_j(\Gamma) \neq 0$, although this would require a completely different method of proof, and we will not explore the question further here.

%\JK{Check: since given $N$ and $\beta$ this only rules out finitely many $j$, maybe there is still a way around the former condition. Also: the assumption that $j \geq N + \beta + 1$ automatically implies that $\lambda_j (\Gamma) > 0$; if $j \leq N + \beta$ this would need to be an additional assumption, since it is possible that $\lambda_1 (\Gamma) = 0$.}
%\JK{If the conjecture is true, we should have $\frac{\lambda_k (\Gamma)}{\lambda_j (\Gamma)} \leq 4\cdot \frac{k^2}{(j-(N+\beta))^2}$, where for the lower bound on $\lambda_j(\Gamma)$ we turn the $N$ Neumann leaves into Dirichlet ones, and break each of the $\beta$ cycles by placing a Dirichlet condition somewhere on the cycle; then use the interlacing inequality that we have shifted down by at most $N+\beta$ positions. Without the conjecture the best we can do would be $(C_{AB})^{\lceil \log_2(\frac{k}{j-(N+\beta})\rceil}$.}
%In principle the constant is explicit (see \eqref{eq:general-graph-explicit}), though as it depends on $C_{AB}$ it will only have an essentially optimal form if Conjecture~\ref{conj:ashbaugh-benguria} is true. But we emphasize that even without that conjecture, this result and Corollary~\ref{cor:dirichlet-tree-k-j} together give an essentially complete general answer to what is driving the existence of quantum graph counterexamples to PPW-type bounds for ratios of eigenvalues.

In a related spirit, we examine under what circumstances can we expect an eigenvalue ratio to attain a minimum and a maximum, within a given class of quantum graphs. We first note that the minimization problem is trivial, even among Dirichlet trees, so we will otherwise restrict ourselves to maximization:

\begin{proposition}
\label{prop:ratio-inf}
    Fix $k > j \geq 1$. Then
    \begin{displaymath}
        \inf \left\{ \frac{\lambda_k (\Gamma)}{\lambda_j (\Gamma)} : \Gamma \text{ compact Dirichlet tree} \right\} = 1.
    \end{displaymath}
    This infimum is always attained: for $j \geq 2$ on an equilateral $k$-star; for $j=1$ only on graphs which have a Dirichlet condition at an interior (non-leaf) vertex, and are thus effectively disconnected.  In particular the infimum is attained on any equilateral $k$-star with a Dirichlet condition at its center vertex.
\end{proposition}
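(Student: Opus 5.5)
The lower bound is immediate: since $k > j$, the eigenvalues are ordered nondecreasingly, so $\lambda_k(\Gamma)/\lambda_j(\Gamma) \geq 1$ for every compact Dirichlet tree. The infimum will therefore equal $1$, and be attained, as soon as we exhibit a graph with $\lambda_j = \lambda_k$. The plan is to do this with the equilateral stars named in the statement and then, for $j=1$, to prove that equality forces a Dirichlet condition at an interior vertex.

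\emph{Case $j \geq 2$.} Take the equilateral $k$-star $\Gamma$ with edges of length $\pi$ and Dirichlet conditions at the $k$ leaves, as in Remark~\ref{rem:big-ab}. The eigenvalue $\lambda_1 = 1/4$ is simple, with eigenfunction $\cos(x/2)$ on each edge (coordinates measured from the centre). Consider the functions equal to $c_e \sin x$ on edge $e$. They are continuous at the centre, where they vanish. The Kirchhoff condition at the centre reduces to $\sum_e c_e = 0$, so they span a space of dimension $k-1$ with eigenvalue $1$. The next eigenvalue is $9/4$. Hence $\lambda_2 = \dots = \lambda_k = 1$, and for $2 \leq j < k$ we get $\lambda_k/\lambda_j = 1$.

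\emph{Case $j = 1$.} On a connected tree with Kirchhoff conditions at all interior vertices, $\lambda_1$ is simple. This is the standard Krein--Rutman/positivity argument: a first eigenfunction can be chosen strictly positive in the interior, and two such functions cannot be orthogonal. So $\lambda_k > \lambda_1$, and the ratio can equal $1$ only if we allow a Dirichlet condition at an interior vertex. Such a condition decouples the graph into independent Dirichlet pieces, and simplicity can then fail. For the explicit example, impose Dirichlet at the centre of the equilateral $k$-star. The graph splits into $k$ Dirichlet intervals of length $\pi$, each contributing first eigenvalue $1$. Thus $\lambda_1 = \dots = \lambda_k = 1$, and the ratio is $1$ for every $1 \leq j < k$. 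This also covers the final sentence of the statement.

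\emph{Main obstacle.} The delicate part is the ``only'' clause for $j = 1$, which requires simplicity of $\lambda_1$ for genuine Dirichlet trees. I would prove it by the min-max argument. If $\psi$ is a first eigenfunction, then $|\psi|$ has the same Rayleigh quotient, so it is also an eigenfunction. A nonnegative eigenfunction cannot vanish at an interior point of an edge (ODE uniqueness), nor at a Kirchhoff vertex: there all outgoing derivatives are $\geq 0$ and sum to zero, so they all vanish, and again ODE uniqueness gives $\psi \equiv 0$ on every incident edge, hence everywhere by connectedness. Therefore $\psi$ has constant sign in the interior, and two such eigenfunctions cannot be orthogonal, so $\lambda_1$ is simple.
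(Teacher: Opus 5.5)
Your proposal is correct and follows the same route as the paper. Both use the trivial lower bound $\lambda_k/\lambda_j\ge 1$; for $j\ge 2$, the equilateral $k$-star with $\lambda_2=\dots=\lambda_k$; and for $j=1$, simplicity of $\lambda_1$ on genuine Dirichlet trees together with the $k$-star carrying a Dirichlet condition at its centre. The only difference is that you supply details the paper takes for granted: the explicit eigenfunctions on the star, and the $|\psi|$/positivity argument showing that $\lambda_1$ is simple.
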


Obviously, the same result trivially holds if we enlarge our class of admissible graphs to include graphs with cycles and/or Neumann leaves.

Otherwise, if we bound the complexity of the graph, in the form of the number of vertices, from above, then we can use the continuity of the eigenvalues with respect to edge length perturbations (as shown in \cite{BLS19}, see Section~\ref{sec:edge-length-perturbations} for more details) to obtain existence of minimizers and maximizers.

\begin{theorem}
\label{thm:existence}
Fix $m \geq 2$ and consider the set of compact, connected quantum graphs that have at most $m$ edges, with any combination of Dirichlet and Neumann conditions at any leaves. Then for any pair $1 \leq j < k$ (where if $j=1$, we also assume that there is at least one Dirichlet leaf), there exist a graph minimizing the ratio $\frac{\lambda_k}{\lambda_j}$ and another graph maximizing this ratio, although the minimizing/maximizing graph may have a Dirichlet condition at a non-leaf.
\end{theorem}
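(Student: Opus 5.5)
The plan is a compactness argument over a finite-dimensional space of edge lengths. Three ingredients are needed: scale invariance, finitely many topologies, and continuity of eigenvalues under edge length perturbations, including edges shrinking to zero.

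First, the ratio $\lambda_k/\lambda_j$ is invariant under scaling $\Gamma \mapsto c\Gamma$, so we may normalise the total length to $|\Gamma| = 1$. Since there are at most $m$ edges, only finitely many underlying discrete graphs occur, each carrying a finite choice of Dirichlet/Neumann assignments at the leaves. It therefore suffices to fix one such combinatorial graph $G$ with its vertex conditions. For this $G$ we optimise over the simplex $\Sigma = \{(\ell_1,\dots,\ell_{|E|}) : \ell_e \geq 0,\ \sum \ell_e = 1\}$, and then take the best of the finitely many optima.

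Second, on the interior of $\Sigma$, where all $\ell_e>0$, each $\lambda_i$ depends continuously on the edge lengths. We extend the functional to the closed simplex $\Sigma$, using the results of \cite{BLS19} recalled in Section~\ref{sec:edge-length-perturbations}. As an edge $e$ shrinks to zero, the eigenvalues converge to those of the graph obtained by contracting $e$, with the following vertex-condition rules:
\begin{itemize}
\item if neither endpoint is Dirichlet, the endpoints merge into a Kirchhoff vertex;
\item if $e$ ends at a Neumann leaf, it simply disappears;
\item if $e$ is a pendant edge at a Dirichlet leaf, or joins vertices one of which is Dirichlet, the merged vertex inherits the Dirichlet condition.
\end{itemize}
The last case is exactly how a Dirichlet condition can land at a non-leaf vertex, which explains the caveat in the statement. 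The limiting graph again has at most $m$ edges and lies in the enlarged admissible class, which includes Dirichlet conditions at interior vertices and therefore possibly effectively disconnected graphs. This gives a continuous extension of each $\lambda_i$ to all of $\Sigma$.

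Third, we must ensure the ratio stays finite and continuous on $\Sigma$, i.e.\ that $\lambda_j$ stays bounded away from zero and $\lambda_k$ stays bounded above.
\begin{itemize}
\item The upper bound on $\lambda_k$ follows from the Ariturk-type bound above, since $N,\beta,D\le$ const$(m)$ and $L=1$.
\item For $j\ge2$, $\lambda_j>0$ on any graph of length $1$: Friedlander/Nicaise-type lower bounds give $\lambda_2 \geq \pi^2/(4L^2)$ for connected pieces. More care is needed if the limit disconnects into several Kirchhoff components, each of which contributes a zero eigenvalue.
\item For $j=1$ we assumed a Dirichlet leaf. The Faber--Krahn bound of Nicaise then gives $\lambda_1 \geq \pi^2/(4L^2) > 0$ on connected graphs with a Dirichlet vertex. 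In degenerate limits the Dirichlet condition is inherited, so each component keeps a Dirichlet point, though this must be checked.
\end{itemize}
Once positivity holds on all of $\Sigma$, the ratio is a continuous function on a compact set. It therefore attains its minimum and maximum, and the minimiser or maximiser is the (possibly degenerate) graph given by the extremal length vector.

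The main obstacle is the last point: degenerations can create components with no Dirichlet vertex, so that $\lambda_1$ (or $\lambda_j$ with several such components) becomes zero and the ratio blows up. My first attempt is to show this cannot happen. Contracting edges never disconnects a graph, so the only source of splitting is Dirichlet conditions at interior vertices, and a component cut off by a Dirichlet vertex contains that vertex and hence still carries a Dirichlet condition. If some pathology does survive, I would restrict to the closed subset of $\Sigma$ on which $\lambda_j \geq \varepsilon$. Interior configurations have $\lambda_j$ bounded below uniformly by the Faber--Krahn bound, so for small $\varepsilon$ this subset contains the whole interior. Hence the supremum and infimum over the interior are approached within this compact set, and the extremum is attained there.
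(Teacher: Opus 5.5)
Your proposal is correct and follows essentially the same route as the paper: finitely many discrete graphs, normalisation to total length one, and compactness of the edge-length simplex combined with the continuity result of \cite{BLS19} (the paper packages this as Proposition~\ref{prop:existence-optimizers}, using Nicaise's Faber--Krahn bound to keep $\lambda_j$ away from zero). The ``obstacle'' you flag does not actually arise. As your own fallback shows, the uniform lower bound on $\lambda_j$ over the interior passes to the whole closed simplex by continuity, which is exactly how the paper handles it; the same continuity on a compact set also makes a separate upper bound on $\lambda_k$ unnecessary.
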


The assumption that there is at least one Dirichlet leaf if $j=1$ is made to control $\lambda_1$ from below. At any rate, uniqueness is not guaranteed.  (Indeed, as long as $m > k$, for any $1 < j < k$ any equilateral $n$-star, $k \leq n \leq m-1$, will serve as a minimizer for the ratio $\frac{\lambda_k}{\lambda_j}$, cf.\ Proposition~\ref{prop:ratio-inf}.) Note that it is not sufficient just to bound the number of \emph{vertices} from above, as opposed to the number of edges, as the ``bunch of balloons'' example from \cite[Remark (2) after Example 1.2]{DH10} shows.

\begin{corollary}
\label{cor:existence}
Fix $D \geq 0$, $N \geq 0$ and $\beta \geq 0$ and consider the set of compact, connected quantum graphs that have at most $D$ Dirichlet leaves, $N$ Neumann leaves, and $\beta$ independent cycles. Then the conclusion of Theorem~\ref{thm:existence} holds verbatim.
\end{corollary}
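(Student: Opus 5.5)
The plan is to reduce Corollary~\ref{cor:existence} to Theorem~\ref{thm:existence}. The key observation is that bounding $D$, $N$ and $\beta$ bounds the number of edges once degree-two vertices are suppressed. Compact quantum graphs equal modulo dummy vertices have the same spectrum, so we may work with the canonical representative in which no vertex has degree two, except possibly a vertex carrying a Dirichlet condition. Dirichlet conditions at non-leaves are admitted in the extended class of Theorem~\ref{thm:existence}; for the graphs in the original class we restrict to leaves only.

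\textbf{Step 1 (counting edges).} For a connected graph with $V$ vertices and $E$ edges we have $\beta = E - V + 1$. The handshake identity gives $2E = \sum_v \deg v \geq (D+N) + 3(V - D - N)$, since every non-leaf vertex has degree at least three. Combining the two relations yields
\begin{displaymath}
    E \leq 2(D+N) + 3(\beta - 1)
\end{displaymath}
for $E \geq 2$. The degenerate cases, such as a single interval or a single loop (where $\beta = 1$ and the degree-two vertex is suppressed), are absorbed by taking the maximum with a small constant. Hence every graph in the class of the corollary has at most $m := \max\{2,\, 2(D+N) + 3\beta\}$ edges.

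\textbf{Step 2 (recovering extremizers inside the class).} The class of the corollary is therefore contained in the class of Theorem~\ref{thm:existence} with this value of $m$. The issue is that the optimizer produced there might not lie back in the constrained class. To handle this, we rerun the proof of Theorem~\ref{thm:existence} on each fixed underlying discrete graph. There are only finitely many topologies with the given $(D,N,\beta)$, each with at most $m$ edges. For each one we take a maximizing or minimizing sequence of edge-length vectors, normalize the total length to $1$ (the ratio is scale invariant), and pass to a convergent subsequence in the compact simplex. By continuity of eigenvalues under edge-length perturbations \cite{BLS19}, including edges of length zero, the limit is realized by a graph obtained by contracting edges.

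\textbf{Step 3 (checking the limit).} Contracting edges never increases $\beta$ or the number of leaves. The exception is an edge from a Dirichlet leaf collapsing into an interior vertex, which produces a Dirichlet condition at a non-leaf. This case is exactly the caveat already allowed in Theorem~\ref{thm:existence}. Taking the best value over the finitely many topologies then gives the extremizer.

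I expect the main obstacle to be keeping $\lambda_j$ away from zero in the limit, particularly when $j = 1$. There we use the hypothesis of at least one Dirichlet leaf together with Nicaise's bound $\lambda_1 \geq \pi^2/(4L^2)$. A second delicate point is making sure that contraction of cycle edges does not take the limit outside the admissible class. This is the same issue as in Theorem~\ref{thm:existence}, and the same handling applies verbatim.
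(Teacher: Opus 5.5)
Your outline follows the paper's own argument. You bound the combinatorial complexity (your handshake count of edges is a cleaner substitute for the paper's vertex count), run the compactness argument of Proposition~\ref{prop:existence-optimizers}(1) on each of the finitely many admissible discrete graphs, and check that limits stay in the class. Steps~1 and~2 are fine. The gap is in Step~3, and the paper's proof closes with the same assertion. It is \emph{not} true that contracting edges never increases the number of leaves except by producing Dirichlet points at non-leaves: collapsing a cycle can create a new Neumann leaf. Take a lasso, namely a loop of length $\varepsilon$ at a vertex $u$ plus a pendant edge of length $1$ from $u$ to a Dirichlet leaf, so $D=1$, $N=0$, $\beta=1$. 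As $\varepsilon\to 0$, the limit in $\mathcal G_G$ keeps the Kirchhoff condition at $u$ (Definition~\ref{def:underlying-vc}), but $u$ now has degree $1$. The limit is therefore the interval with one Dirichlet and one Neumann endpoint, which has $N=1$. In general, suppose a collapsed connected subgraph $H$ has a single external edge and contains no leaves. Then $\sum_{v\in H}\deg v = 2|E(H)|+1$, with all degrees at least $3$, forces $|E(H)|-|V(H)|+1\geq 1$, and the contracted point is a new Neumann leaf. So $D$, $\beta$ and $N+\beta$ are monotone under contraction, but $N$ alone is not.

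Your remark that this is ``the same issue as in Theorem~\ref{thm:existence}'' is exactly where the reasoning slips. There the only constraint is the number of edges, which does decrease under contraction, and Neumann leaves are unrestricted, so a new Neumann leaf is harmless. The caveat in that theorem covers only Dirichlet conditions at non-leaves. As it stands, your argument produces an extremizer with at most $D$ Dirichlet leaves, at most $\beta$ cycles and $N'+\beta'\leq N+\beta$, but possibly with $N'>N$, which lies outside the class of the corollary. To get the statement verbatim you need an additional argument that the extremal value is also attained by a graph in which no cycle has collapsed into a leaf. This is not a formality. Heuristically, to first order and for simple eigenvalues, a loop of length $\varepsilon$ at a Neumann leaf $u$ acts as a point mass and shifts $\log\lambda_i$ by $-\varepsilon\,\psi_i(u)^2$, so its effect on $\lambda_k/\lambda_j$ has no definite sign. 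Alternatively, the proof goes through unchanged if the class is defined by contraction-monotone constraints: at most $D$ Dirichlet leaves, at most $\beta$ cycles, and $N'+\beta'\leq N+\beta$.
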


In particular, we may, if we wish, restrict to considering only graphs having Dirichlet conditions at the leaves, or (if $j \geq 2$) only Neumann conditions at the leaves. The major question is whether the restriction on the number of edges in Theorem~\ref{thm:existence}, or alternatively the three restrictions in Corollary~\ref{cor:existence}, can be weakened to a restriction on just $N$ and $\beta$, independent of the number of Dirichlet leaves. However, proving this would likely require a completely new approach, so we do not attempt it here.

We will prove Theorem~\ref{thm:general-graph-k-j}, Proposition~\ref{prop:ratio-inf}, Theorem~\ref{thm:existence} and Corollary~\ref{cor:existence} in Section~\ref{sec:general-graphs}.

We finish the introduction by collecting, and making more explicit, the open problems mentioned throughout.

\begin{problem}
\begin{enumerate}
    \item Study the problem of maximizing the ratios $\frac{\lambda_k}{\lambda_1}$ on Dirichlet trees. In particular, as a $1$-dimensional analogue of \cite[Open problem 14]{H06}, we can expect that $\frac{\lambda_3}{\lambda_1}$ should be maximized by the interval. (See Remark~\ref{rem:big-ab}(3).) Numerical evidence suggests that even for general $k \geq 4$, $\frac{\lambda_k}{\lambda_1}$ might attain its maximum on the interval.
    \item Determine the optimal constant $\frac{9}{4} \leq C \leq 4$ in the bound $\frac{\lambda_k(\Gamma)}{\lambda_j (\Gamma)} \leq C \cdot \frac{k^2}{j^2}$ for all Dirichlet trees $\Gamma$ and all $k,j \geq 1$. Is it $\frac{9}{4}$, corresponding to the limit for equilateral stars as the number of edges diverges to $\infty$? (See Remark~\ref{rem:big-ab}(2).)
    \item Characterize the case of equality in the P\'olya-type bound \eqref{eq:dirichlet-tree-polya}, $\lambda_k(\Gamma) \geq \frac{\pi^2 k^2}{L^2}$, for all $k \geq 1$ and all Dirichlet trees $\Gamma$ of total length $L$. Is it true that there is equality if and only if all edges of $\Gamma$ have length equal to an integer multiple of $\frac{L}{k}$?
    \item Obtain an upper bound on $\frac{\lambda_k(\Gamma)}{\lambda_j(\Gamma)}$, for any compact quantum graph $\Gamma$ and any pair $k \geq j$, where the upper bound should only depend on the number of Neumann leaves $N$ and the number of independent cycles $\beta$ (cf.\ Theorem~\ref{thm:general-graph-k-j}).
    \item Study whether, for any given $N \geq 0$ and $\beta \geq 0$, among all compact graphs with at most $N$ Neumann leaves and $\beta$ independent cycles, for any $k \geq j \geq 1$ there exists a graph maximizing $\frac{\lambda_k}{\lambda_j}$ (independent of the number of Dirchlet leaves) cf.\ Corollary~\ref{cor:existence}.
\end{enumerate}
\end{problem}

\section{Notation and preliminaries}
\label{sec:prelim}

\subsection{Metric graphs}

Throughout, we will consider only compact metric graphs, that is, graphs $\Gamma = (\mathcal{V},\mathcal{E})$ on a finite edge set $\mathcal{E}$, of cardinality $m:=|\mathcal{E}|$ (and a finite vertex set $\mathcal{V}$ of cardinality $n:=|\mathcal{V}|$) where every edge $e \in \mathcal{E}$ can be identified with a finite interval $e \simeq [0,\ell_e]$, and is incident with two vertices $v_1,v_2 \in \mathcal{V}$, corresponding to the endpoints of the interval. Note that, in general, we permit loops (edges $e$ for which $v_1=v_2$ in the above notation) and parallel edges. Such a graph is a (compact) metric measure space when equipped with the Lebesgue measure induced naturally by the Lebesgue measure on each interval and the natural Euclidean metric induced by the intervals and the identification of their endpoints to form vertices. This metric measure space, up to isometric isomorphism, is independent of the choice of interval used to represent each edge. See \cite[Section~1.3]{BK13} or \cite{M19} for more details.

We start with some basic graph notation and nomenclature. We will assume without further comment that all our graphs are connected unless explicitly stated otherwise.

\begin{definition}
Let $\Gamma = (\mathcal{V},\mathcal{E})$ be a compact metric graph.
\begin{enumerate}
\item The total length of the graph will be denoted by $L:=|\Gamma|=\sum_{e \in \mathcal{E}} \ell_e$.
\item The (first) \emph{Betti number} of the graph, or the number of independent cycles in $\Gamma$, will be denoted by $\beta:=m-n+1 = |\mathcal{E}| - |\mathcal{V}| + 1$.
\item We call $\Gamma$ a \emph{tree} if $\beta = 0$.
\item The \emph{degree} of a vertex $v \in \mathcal{V}$, $\deg v$, is the number of edges incident with $v$ (where a loop counts twice).
\item We call a vertex $v \in \mathcal{V}$ a \emph{leaf} if $\deg v = 1$. Note that this term will be applied to general graphs, not just to trees.
\item We call a vertex $v \in \mathcal{V}$ a \emph{dummy vertex} if $\deg v = 2$.
\end{enumerate}
\end{definition}

\subsection{Function spaces and Laplacians}

We   use standard notation for spaces of functions on $\Gamma$: for $1 \leq p \leq \infty$, $L^p(\Gamma) \simeq \bigoplus_{e \in \mathcal{E}} L^p(0,\ell_e)$ is the space of $p$-integrable functions (or essentially bounded functions if $p=\infty$), $C(\Gamma)$ is the space of continuous functions on $\Gamma$ (that is, edgewise continuous up to the vertices, and with a well-defined value at each vertex), and
\begin{displaymath}
    H^1(\Gamma) = \{ f \in C(\Gamma): f|_e \in H^1(e) \text{ for all } e \in \mathcal{E} \}
\end{displaymath}
is the space of square-integrable functions with square-integrable weak derivative on $\Gamma$. We will also use the notation $H^1_0(\Gamma,V_D)$ for the subspace of functions in $H^1(\Gamma)$ which vanish on some finite set $V_D \subset \Gamma$ of points in $\Gamma$ (which without loss of generality we can assume to be vertices), or $H^1_0(\Gamma)$ if there is no ambiguity as to which set $V_D$ is meant (most commonly, but not necessarily, this will be the set of leaves of $\Gamma$).

For such a set $V_D$, we define the Laplacian on $\Gamma$ as the operator $-\Delta$ on $L^2(\Gamma)$ associated with the sesquilinear form
\begin{displaymath}
    h [f,g] = \int_\Gamma f' \bar{g}'\,\text{d}x
\end{displaymath}
with form domain $H^1_0(\Gamma,V_D)$. As is well known (and can be found in many places, including \cite[Chapter~1]{BK13}), this operator is self-adjoint and bounded from below, and has compact resolvent, and thus a discrete spectrum consisting only of eigenvalues, whose finite algebraic and geometric multiplicities coincide, and which will be denoted by
\begin{displaymath}
    \lambda_1(\Gamma) < \lambda_2(\Gamma) \leq \lambda_3(\Gamma) \leq \ldots \to \infty.
\end{displaymath}
We will generally denote the (or rather an) eigenfunction associated with $\lambda_k(\Gamma)$ by $\psi_k$, and will generally choose these eigenfunctions to form an orthonormal basis of $L^2(\Gamma)$.

It is also well known that at all vertices outside $V$, functions $f$ in the domain of $-\Delta$ satisfy, in addition to continuity, the Kirchhoff condition
\begin{displaymath}
    \sum_{e\sim v} \partial_\nu f|_e(v) = 0,
\end{displaymath}
where $\partial_\nu f|_e(v)$ is the derivative of $f|_e \in H^2(e)$ at the endpoint of $e$ corresponding to the vertex $v$, oriented to point into $v$. At the vertices in $V$ the operator satisfies a zero, or Dirichlet, condition.

If $\Gamma$ is a tree and $V$ is its set of leaves, then we will speak of the Dirichlet Laplacian and ``Dirichlet leaves''; if $v \not\in V_D$ has degree $1$, then the Kirchhoff condition reduces to the Neumann condition $\partial_\nu f|_e (v) = 0$, in which case we will speak of ``Neumann leaves''. If $V_D = \emptyset$ and we have the Laplacian with only standard Kirchhoff conditions, then $\lambda_1(\Gamma) = 0$; otherwise, if $V_D \neq\emptyset$, necessarily $\lambda_1(\Gamma)>0$, as is well known.

%If $V_D = \emptyset$, we will speak of the Laplacian with standard or Neumann--Kirchhoff conditions, for short the standard Laplacian, whose eigenvalues we will also denote by
%\begin{displaymath}
%    0 = \mu_1(\Gamma) < \mu_2(\Gamma) \leq \mu_3(\Gamma) \leq \ldots \to \infty,
%\end{displaymath}
%the eigenfunctions associated with $\mu_1(\Gamma)$ being the constant functions.

We will occasionally need to impose delta, or Robin, conditions at one or more points (again, without loss of generality vertices). For such a finite set $V_R$, for each $v \in V_R$ we choose a number $\alpha_v \in \R$, write $\alpha$ for the vector of $\alpha_v$ (or, in an abuse of notation, $\alpha \in \R$ if all $\alpha_v$ coincide) and define the form
\begin{displaymath}
    h_\alpha [f,g] = \int_\Gamma f'\bar g'\,\text{d}x + \sum_{v \in V_R} \alpha_v f(v)\overline{g(v)}
\end{displaymath}
for all $f,g \in H^1_0(\Gamma,V_D)$. The associated Laplacian on $L^2(\Gamma)$ enjoys essentially the same properties described above; we will tend to denote its eigenvalues by $\lambda_k^\Gamma(\alpha)$, as in such cases we will be primarily interested in the dependence of the eigenvalues on the parameter(s) $\alpha$. (We stress that $V$ may be arbitrary as long as it is finite and $V_D \cap V_R = \emptyset$, that is, any mix of Dirichlet and standard conditions is allowed at the other vertices.) At any vertex $v \in V_R$ the corresponding Robin-type condition, or delta condition, reads
\begin{displaymath}
    \sum_{e\sim v} \partial_\nu f|_e(v) + \alpha_v f(v) = 0
\end{displaymath}
under our choice of convention on $\nu$. We will also speak of a delta potential (of strength $\alpha_v$) at $v$. If $\alpha_v = 0$, then we recover standard conditions at the vertex, while $\alpha_v=\infty$ corresponds formally to a Dirichlet condition.

In all the above cases, the eigenvalues admit the usual variational characterization via the associated Rayleigh quotient
\begin{displaymath}
    R[f] = \frac{h[f,f]}{\|f\|_{L^2(\Gamma)}^2}
\end{displaymath}
for $f \in H^1_0(\Gamma,V_D)$ (or $R_\alpha[f] = \frac{h_\alpha[f,f]}{\|f\|_{L^2(\Gamma)}^2}$ in the presence of one or more Robin vertices).

We do not go into further details, which may be found in multiple sources, including \cite{BK13,Ku24} and also \cite[Section~2]{BKKM19}.

Finally, it is well known that the insertion (or deletion) of a dummy vertex equipped with Kirchhoff conditions induces an isometric isomorphism at the level of all the function spaces we are considering, and a unitary equivalence at the level of the operators; moreover, all the geometric quantities of interest (the number of Dirichlet and Neumann leaves, the number of independent cycles, the total length) are unaffected (cf.\ \cite[Assumption~3.1 and the discussion after it]{BKKM17} or \cite[Remark~2.1]{BKKM19}, for example). Thus, as usual, we will treat any given point of a graph as a vertex if it is convenient to do so, and otherwise suppress all dummy vertices.

\subsection{Edge length perturbations}
\label{sec:edge-length-perturbations}

Given a metric graph $\Gamma$ on $m:=|E|$ edges, we may associate an underlying discrete graph $G = (V,E)$, where now every edge $e \in E$ corresponds merely to a relation $e \simeq (v_1,v_2)$ between two vertices $v_1,v_2 \in V$. Indeed, $\Gamma$ may also be considered as a pair $(G,{\bf l})$, where $G$ is the underlying discrete graph and ${\bf l} = (\ell_{e_1},\ldots,\ell_{e_m}) \in \R^m_+ \setminus \{0\}$ is a vector of edge lengths, the numbering of the edges being fixed in function of $G$ for the purposes of the identification. For practical purposes, it will be important to allow $\ell_{e_i} = 0$ for one or more edges, in which case the two adjacent vertices will coincide in $\Gamma$, that is, $\R^m_+$ is taken as the \emph{closed} positive quadrant in $m$-dimensional space, from which we remove only the origin (where all edge lengths would be zero).

\begin{definition}
\label{def:underlying}
Let $\Gamma$ be a compact metric graph, and let $G$ be a discrete graph.
\begin{enumerate}
\item We say that $G$ is a \emph{proper} underlying discrete graph if, with the above notation, $\ell_{e} > 0$ for all $e \in E$. Note that $\Gamma$ may have multiple underlying discrete graphs, but up to a renumbering of the edges only one proper underlying discrete graph.
\item Let $G$ be a proper underlying discrete graph for $\Gamma$. We denote by $\GammaClass$ the set of all metric graphs which have $G$ as a (not necessarily proper) underlying discrete graph.
\item We denote by $\GClass$ the set of all compact metric graphs which have $G$ as an underlying discrete graph, proper or otherwise.
\end{enumerate}
\end{definition}

Thus if $\Gamma' \in \GammaClass$, then $\Gamma'$ can be obtained from $\Gamma$ simply by varying the edge lengths in $\Gamma$, whereby some edge lengths may shrink to zero when passing from $\Gamma$ to $\Gamma'$.

However, for any two compact metric graphs $\Gamma,\Gamma'$ one may find a discrete graph $G$ such that $\Gamma,\Gamma' \in \GClass$ (just glue $\Gamma$ and $\Gamma'$ at a single vertex, and take $G$ to be the proper underlying discrete graph of the glued graph).

One of the main reasons for considering such classes of graphs is that, as is now well established, the eigenvalues are continuous (indeed, mostly differentiable) functions of the edge lengths. For this we need a slight modification of the classes, to allow for the differential operator, which in our case is determined by the vertex conditions we impose.

\begin{definition}
\label{def:underlying-vc}
Given a discrete graph $G$ with $n$ vertices $v_1,\ldots,v_n$ and a vector $\alpha \in (\R\cup \infty)^n$, we assume that, the vertex $v_i$ is equipped with a potential of strength $\alpha_i$, $i=1,\ldots,n$ (under the convention that $\alpha_i=0$ returns a standard condition and $\alpha_i=\infty$ is used for a Dirichlet condition).

We will denote by $\mathcal G_{G,\alpha}$ the set of all quantum graphs $\Gamma$ with underlying discrete graph $G$, equipped with the Laplacian with vertex conditions specified by the vector $\alpha$, where if one or more edges of $\Gamma \in \mathcal G_{G,\alpha}$ adjacent to $v$ has length zero, then the correct vertex condition to impose at $v$ is equal to the \emph{sum} $\sum_w \alpha_w$ taken over all vertices adjacent to $v$ in $G$ and coincident with $v$ in $\Gamma$ (in particular, a Dirichlet condition is imposed at $v$ in $\Gamma$ if it is imposed at any of the $w$ in the class $\mathcal G_{G,\alpha}$)
\end{definition}

In a slight abuse of notation, we will often not distinguish between $\GClass$, the set of all metric graphs with a given topology, and $\mathcal G_{G,\alpha}$, the set of all quantum graphs with that topology and pre-imposed vertex conditions.

\begin{theorem}
\label{thm:edge-length-continuity}
Let $G$ be a discrete graph with $m$ edges and $n$ vertices, and fix any mix of vertex conditions corresponding to a vector $\alpha \in (\R\cup\infty)^n$.
\begin{enumerate}
\item Identify any $\Gamma = \Gamma(\ell) \in \mathcal G_{G,\alpha}$ with its vector of edge lengths $\ell \in \R^m_+ \setminus \{0\}$. Then for each $k \in \N$, and with the above convention on the vertex conditions, $\ell \mapsto \lambda_k(\ell) := \lambda_k(\Gamma(\ell))$ is a continuous function on $\R^m_+ \setminus \{0\}$.
\item Let $\lambda_k(\Gamma_0)$ be a simple eigenvalue of $\Gamma_0 \in \mathcal G_{G,\alpha}$, where $G$ is assumed proper, i.e., $\ell_e(\Gamma_0) > 0$ in $\Gamma_0$ for all $e \in E$. Denote by $\psi_k$ an associated eigenfunction, chosen to have $L^2$-norm $1$. Then, for any fixed $e \in E$, the map $\ell_e \mapsto \lambda_k(\Gamma)$, $\Gamma \in \mathcal G_{G,\alpha}$, is differentiable at $\Gamma_0$, with
\begin{displaymath}
    \frac{{\rm d}}{{\rm d}\ell}\lambda_k(\Gamma_0) = -\psi_k'(x)^2 - \lambda_k(\Gamma_0)\psi_k(x)^2
\end{displaymath}
for any $x$ in the interior of the edge $e$ in $\Gamma_0$ (this quantity being independent of $x \in e$).
\end{enumerate}
\end{theorem}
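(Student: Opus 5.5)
This result is essentially known (continuity under edge length perturbations is proved in \cite{BLS19}, and the Hadamard-type formula is standard), so the plan is to reduce both parts to variational arguments in a fixed space.

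\textbf{Part (1).} We work with edge lengths $\ell$ in the interior of $\R^m_+$. We rescale each edge $e$ to the unit interval via $x = \ell_e t$. This identifies $H^1_0(\Gamma(\ell),V_D)$ with a fixed space $\mathcal H$ on the reference graph. The form and the $L^2$ norm then become
\begin{displaymath}
    \sum_e \ell_e^{-1}\int_0^1|u_e'|^2\,dt \;+\; \sum_v \alpha_v |u(v)|^2
    \qquad\text{and}\qquad
    \sum_e \ell_e \int_0^1 |u_e|^2\,dt .
\end{displaymath}
On compact subsets of the open quadrant these weights are uniformly comparable. Min--max therefore gives two-sided bounds $\lambda_k(\ell') \le (1+o(1))\lambda_k(\ell) + o(1)$ as $\ell' \to \ell$; the additive term absorbs the delta potentials, using the trace estimate $|u(v)|^2 \le \varepsilon\|u'\|^2 + C_\varepsilon\|u\|^2$. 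This yields continuity in the interior.

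The \textbf{main obstacle} is continuity at the boundary, where some $\ell_e \to 0$ and vertices merge with the summed conditions of Definition~\ref{def:underlying-vc}. We would follow \cite{BLS19} and prove an upper and a lower bound separately.

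For the upper bound: an eigenfunction $u$ of the limit graph $\Gamma_0$ extends to the nearby graph by setting it constant on the short edges, equal to the value at the merged vertex. This costs only $O(\ell_e)$ in the $L^2$ norm and in the delta terms, and it respects any Dirichlet condition, since that condition holds at the merged vertex. Min--max then gives $\limsup \lambda_k \le \lambda_k(\Gamma_0)$.

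For the lower bound: take $L^2$-normalized eigenfunctions on $\Gamma(\ell_n)$, which are bounded in $H^1$. Restrict them to the edges of positive limiting length, rescaled. Across each short edge the oscillation is at most $\ell_e^{1/2}\|u'\|_{L^2}$, which tends to $0$. Hence the limits agree at the merged points, satisfy the Dirichlet condition there, and the delta terms converge to the summed strengths. By compactness and lower semicontinuity of the form we get $\liminf \lambda_k \ge \lambda_k(\Gamma_0)$, applied to $k$-dimensional orthonormal families so that the limits remain orthonormal.

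\textbf{Part (2).} Fix $\Gamma_0$ with all $\ell_e > 0$ and a simple eigenvalue $\lambda_k(\Gamma_0)$. We change only $\ell_e$, rescaling just the edge $e$ to obtain a fixed space. The resulting form family is analytic in $\ell_e$, so Kato's perturbation theory (or the implicit function theorem) gives a differentiable eigenpair $(\lambda_k,\psi_k)$. The Feynman--Hellmann formula then reads
\begin{displaymath}
    \dot\lambda = \dot h[\psi,\psi] - \lambda\,\dot{\|\psi\|^2}.
\end{displaymath}
Writing $\ell = \ell_e$ and $x = \ell t$, the edge-$e$ part of the form is $\ell^{-1}\int_0^1|u'|^2\,dt$ and the edge-$e$ part of the norm is $\ell\int_0^1|u|^2\,dt$. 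Differentiating in $\ell$ gives
\begin{displaymath}
    \frac{d\lambda}{d\ell} = -\frac{1}{\ell}\int_e|\psi'|^2\,dx \;-\; \frac{\lambda}{\ell}\int_e|\psi|^2\,dx .
\end{displaymath}
Finally, on the edge $e$ the function $\psi$ solves $-\psi'' = \lambda\psi$ and can be taken real. Hence $(\psi')^2 + \lambda\psi^2$ is constant along $e$, since its derivative is $2\psi'(\psi'' + \lambda\psi) = 0$. The edge averages in the formula above therefore equal the pointwise value, which gives
\begin{displaymath}
    \frac{{\rm d}}{{\rm d}\ell}\lambda_k(\Gamma_0) = -\psi_k'(x)^2 - \lambda_k(\Gamma_0)\psi_k(x)^2
\end{displaymath}
for any $x$ in the interior of $e$, as stated.
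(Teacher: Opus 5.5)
Your proposal is correct. The main difference from the paper is that the paper does not actually prove this theorem. For part (1) it quotes \cite[Lemma~3.4 and Theorem~3.6]{BLS19}, and for part (2) it quotes Friedlander's computation \cite{F05b}, extended to general conditions via \cite[Remark~3.14]{BKKM19}. You instead give a self-contained variational proof:
\begin{itemize}
\item in the interior, rescaling to a fixed reference space plus min--max;
\item at degenerate length vectors, an explicit extension (constant on the shrinking edges) for the upper bound, and an $H^1$-compactness and oscillation argument for the lower bound;
\item a Feynman--Hellmann computation on the rescaled form for the Hadamard formula.
\end{itemize}

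What the citation buys is generality. \cite{BLS19} treats arbitrary self-adjoint vertex conditions, where the limiting condition at a contracted edge need not be the naive one and the analysis is genuinely harder. What your route buys is transparency in exactly the setting the paper needs. Since only $\delta$ and Dirichlet conditions occur, the form domain is $H^1$ with point constraints. The ``sum of strengths'' rule of Definition~\ref{def:underlying-vc} then falls straight out of the estimate $|u(v)-u(w)|\le \ell_e^{1/2}\|u'\|$.

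When you write it up, make three points explicit.
\begin{itemize}
\item The trace and Sobolev constants must be uniform along the sequence. This holds because $\ell\neq 0$ keeps at least one edge length bounded below and $G$ is connected. It is what gives the uniform $H^1$ bound when some strengths are negative, and it is what makes the $L^2$ mass on the shrinking edges vanish, so that orthonormality passes to the limit.
\item In part (2) the $L^2$ inner product itself depends on $\ell_e$. You therefore have a generalized eigenproblem $h_\ell[u,v]=\lambda\, m_\ell[u,v]$, with both forms analytic in $\ell_e$ on a fixed space, not a type (B) family on a fixed Hilbert space. The unitary $\sqrt{\ell_e}$-rescaling would destroy continuity at the vertices, so the implicit function theorem, which you mention, is the clean tool.
\item The differentiable branch you obtain coincides with $\lambda_k$ for nearby $\ell_e$. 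This follows from simplicity together with the continuity in part (1).
\end{itemize}
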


The first statement is contained in \cite[Lemma~3.4 and Theorem~3.6]{BLS19}. The second statement is often called a
{\it Hadamard formula}, by way of analogy with the formulas for the shape derivative of Laplacian eigenvalues defined on Euclidean domains. It was first proved for standard conditions only \cite[Proof of the lemma]{F05b}, but is valid in the more general case with essentially the same proof, see \cite[Remark~3.14]{BKKM19}.

\subsection{Existence of minimizers and maximizers}
\label{sec:existence}

Before proceeding, we will use the previous theorem to give a basic existence result for graphs maximizing and minimizing any given eigenvalue ratio among all graphs with the same topology. This is very much in the spirit of \cite{BL17}, although here the scaling (total length) of the graph is irrelevant. In what follows we will use the notation and terminology from Definitions~\ref{def:underlying} and~\ref{def:underlying-vc}.

\begin{proposition}
\label{prop:existence-optimizers}
    Fix any discrete graph $G$ and any pair of indices $k, j \geq 1$, assume that only Dirichlet and standard vertex conditions are imposed, and, if standard Kirchhoff conditions apply at all vertices, assume further that $j \geq 2$.
    \begin{enumerate}
    \item Assume that for, each vertex $v$ of $G$, a fixed vertex condition of either Dirichlet or standard type has been specified, corresponding to a vector $\alpha$ with $\alpha_i \in \{0,\infty\}$ for all $i$, and suppose that $k > j$. Then there exist $\Gamma^\ast, \Gamma_\ast \in \mathcal G_{G,\alpha}$ such that
    \begin{displaymath}
        \frac{\lambda_k(\Gamma^\ast)}{\lambda_j(\Gamma^\ast)} = \max_{\Gamma \in \mathcal G_{G,\alpha}} \frac{\lambda_k(\Gamma)}{\lambda_j(\Gamma)}, \qquad
        \frac{\lambda_k(\Gamma_\ast)}{\lambda_j(\Gamma_\ast)} = \min_{\Gamma \in \mathcal G_{G,\alpha}} \frac{\lambda_k(\Gamma)}{\lambda_j(\Gamma)}.
    \end{displaymath}
    \item Now take any two choices of standard and Dirichlet conditions, corresponding to two vectors $\alpha,\beta$ each of whose entries take on only the values $0$ and $\infty$. For $\Gamma \in \GClass$, denote by $\lambda_k$ and $\tau_k$ the $k$-th eigenvalue of the Laplacian on $\Gamma$ with the vertex conditions corresponding to $\alpha$ and $\beta$, respectively. Then for any $k,j \geq 1$ (or $j \geq 2$ if $\beta = 0$, i.e. for the $\tau_k$ only standard vertex conditions are present), there exist $\Gamma^\ast, \Gamma_\ast \in \mathcal G_G$ such that
    \begin{displaymath}
        \frac{\lambda_k(\Gamma^\ast)}{\tau_j(\Gamma^\ast)} = \max_{\Gamma \in \mathcal G_G} \frac{\lambda_k(\Gamma)}{\tau_j(\Gamma)}, \qquad
        \frac{\lambda_k(\Gamma_\ast)}{\tau_j(\Gamma_\ast)} = \min_{\Gamma \in \mathcal G_G} \frac{\lambda_k(\Gamma)}{\tau_j(\Gamma)}.
    \end{displaymath}
    \end{enumerate}
\end{proposition}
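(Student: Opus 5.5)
The plan is a compactness argument on the normalized edge-length simplex, using the continuity statement of Theorem~\ref{thm:edge-length-continuity}(1). Each ratio $\frac{\lambda_k}{\lambda_j}$ or $\frac{\lambda_k}{\tau_j}$ is invariant under the scaling $\ell \mapsto t\ell$, $t>0$, because all eigenvalues scale like $t^{-2}$. So we may restrict to the set
\begin{displaymath}
    S := \{\ell \in \R^m_+ : \textstyle\sum_e \ell_e = 1\}.
\end{displaymath}
This set is compact, contained in $\R^m_+\setminus\{0\}$, and its image under $\ell \mapsto \Gamma(\ell)$ exhausts $\mathcal G_{G,\alpha}$ (respectively $\mathcal G_G$) up to scaling. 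It therefore suffices to show that the ratio is a continuous real-valued function on $S$. The maximum and minimum then exist by the extreme value theorem.

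For continuity, Theorem~\ref{thm:edge-length-continuity}(1) already gives that $\ell \mapsto \lambda_k(\ell)$ and $\ell \mapsto \lambda_j(\ell)$ (or $\tau_j(\ell)$, applying the theorem with the vector $\beta$ in place of $\alpha$) are continuous on $S$. The quotient is continuous as soon as the denominator never vanishes on $S$, and this is the only real point to check.

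\begin{itemize}
\item If $j \geq 2$, note that on a connected graph the eigenvalue $0$ occurs at most once, and only when there are no Dirichlet conditions. Graphs in the class remain connected even when some edges shrink to zero, since contraction preserves connectedness. Hence $\lambda_j \geq \lambda_2 > 0$.
\item If $j = 1$, the hypothesis guarantees at least one Dirichlet vertex in $G$. By the convention in Definition~\ref{def:underlying-vc}, this Dirichlet condition survives contraction: if the Dirichlet vertex merges with others, the merged vertex is Dirichlet. Hence every $\Gamma(\ell)$, $\ell \in S$, carries a Dirichlet point. Since $\ell \neq 0$, the graph has positive total length, so $\lambda_1(\Gamma(\ell)) > 0$.
\end{itemize}

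The same reasoning applies to $\tau_j$ with $\beta$. So the denominator is a continuous, strictly positive function on the compact set $S$, and is thus bounded below by a positive constant.

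One subtlety deserves care. At degenerate $\ell$, some edges of positive length may sit between two vertices that are both Dirichlet, or the graph may effectively split into pieces. I would handle this by observing that Theorem~\ref{thm:edge-length-continuity}(1) is stated on all of $\R^m_+\setminus\{0\}$ with exactly this vertex-condition convention, so no extra work is needed. Positivity of $\lambda_1$ likewise holds on each piece containing a Dirichlet point. If some piece has no Dirichlet point, we need $j\ge 2$ counted appropriately; however, contraction never disconnects, so this case cannot occur.

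The main (mild) obstacle is precisely this verification that the denominator stays positive at the boundary of $S$. Once that is in place, the conclusion follows immediately for both parts (1) and (2).
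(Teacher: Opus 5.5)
Your proposal is correct and is essentially the paper's argument: normalize the total length, then use compactness of the normalized edge-length set together with the continuity in Theorem~\ref{thm:edge-length-continuity}(1). The only difference is minor. You show the denominator never vanishes (contraction preserves connectedness and Dirichlet conditions survive merging) and then apply the extreme value theorem, whereas the paper first bounds the ratio a priori via Nicaise's Faber--Krahn bound and the upper bound \eqref{eq:general-graph-dnb}, and then extracts a convergent maximizing (or minimizing) sequence.
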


Note that $\Gamma^\ast$ and $\Gamma_\ast$ are certainly not unique, since any homethetic scaling of any extremizer will continue to be an extremizer. Also note that they may, a priori, have one or more edge lengths equal to zero, when considered to have $G$ as an underlying discrete graph; in this case, the correct vertex conditions are determined as described in Definition~\ref{def:underlying-vc}.

\begin{proof}[Proof of Proposition~\ref{prop:existence-optimizers}]
We first consider (1) and the case of the maximizers. Since the ratio $\frac{\lambda_k}{\lambda_j}$ is scale invariant, we may without loss of generality restrict to considering all graphs $\Gamma \in \mathcal G_{G_\alpha}$ with total length exactly $1$. In this case, note that
\begin{displaymath}
    \lambda_j(\Gamma) \geq \frac{\pi^2}{4}
\end{displaymath}
due to Nicaise' Faber--Krahn-type inequality \cite[Th\'eor\`eme~3.1]{N87}. (More precisely, if $j=1$ and there is at least one Dirichlet vertex, we have this bound; if $j \geq 2$, then even with no Dirichlet vertices the bound may be improved to $\pi^2$.) We can similarly bound $\lambda_k (\Gamma)$ from above, for example using \eqref{eq:general-graph-dnb} (and noting that $\beta$, $N$ and $D$ may each be replaced by $m = \# V(G)$, say).

In particular, within the given class the ratio $\frac{\lambda_k}{\lambda_j}$ is bounded from above (and naturally from below by $1$), and thus there exist both a minimizing and a maximizing sequence.

Consider the case of the latter, calling the sequence $\Gamma_n$. Since each $\Gamma_n$ has the same underlying discrete graph, each $\Gamma_n$ can be specified uniquely by a vector of edge lengths $(\ell_{1,n},\ldots,\ell_{p,n}) \in \R_+^p$ (where here $p = \# E(G)$ is the common number of edges of the graphs, noting that $\ell_{i,n}$ may be zero).

Up to a subsequence, since each $\ell_{i,n} \in [0,1]$, we have convergence of each edge length to a limit value. Now the limit graph $\Gamma^\ast$ will still belong to $\mathcal G_G$, although some edge lengths may be zero. The continuity result of \cite{BLS19} in the form of Theorem~\ref{thm:edge-length-continuity}(1) implies that both $\lambda_k(\Gamma_n)$ and $\lambda_j(\Gamma_n)$ converge; thus $\Gamma^\ast$ is indeed a maximizer in the given class.

The case of the minimizing sequence, and the proof of (2), are completely analogous.
\end{proof}

\section{On Ashbaugh--Benguria-type bounds for Dirichlet trees}
\label{sec:ab-dirichlet-trees}

The proof of Theorem~\ref{thm:vc} will be via induction on the number $m \geq 1$ of edges of $\Gamma$. If $m=1$ (or $2$) there is nothing to prove, since $\Gamma$ is an interval.

The scheme of the proof of the induction step is roughly as follows: we will show by contradiction that any graph $\Gamma$ on $m$ edges
which is a local maximizer (with respect to edge length perturbations) of the ratio $\frac{\lambda_1}{\tau_1}$ must have at least one edge of length zero, and thus be identical to a graph on at most $m-1$ edges, for which the statement is true by the induction hypothesis.

To show this, we will assume $\gamaste$ is any local critical point and find a local perturbation of $\gamaste$, essentially using a surgery argument, whose ratio is larger, and which has the same topology as $\gamaste$ up to setting some of the edge lengths to be equal to zero. More precisely, we will replace a pendant star subgraph (see Definition \eqref{def:pendant-star}) of $\gamaste$ with an interval.

The proof will be divided into several lemmas. In Lemma~\ref{lem:critical-point-equal-amplitudes} and~\ref{lem:critical-point-equal-lengths} we will describe certain necessary conditions on the edge lengths for the graph $\gamaste$ to be a critical point, based on the Hadamard-type formula in Theorem~\ref{thm:edge-length-continuity}. We will then show (Lemma~\ref{lem:interval-star-dirichlet-robin-comparison}) that, if we cut a suitable pendant star from the rest of $\gamaste$ along one of the eigenfunctions, replacing it by an interval of the right length will leave the Dirichlet eigenvalue $\lambda_1$ unaffected but lower the Neumann eigenvalue $\tau_1$. A surgery principle from \cite{BKKM19} will allow us to complete the proof by gluing the interval back to the rest of $\gamaste$ in place of the star.

We now go into the details. Suppose the statement is true for all graphs with at most $m-1 \geq 1$ edges (and all possible choices of Neumann leaf for all possible graphs). Now let $\Gamma$ be any tree with $m$ edges, assumed all to have strictly positive length, and choose any leaf $v$ of $\Gamma$ to be equipped with the Neumann condition. Let $G$ be the (proper) underlying discrete graph associated with $\Gamma$ (which by construction will also have $m$ edges), so that $\Gamma \in \GClass$. By Proposition~\ref{prop:existence-optimizers}(2) there exists some $\gamaste \in \GClass$ such that
\begin{equation}
\label{eq:gamma-ast-max-ratio}
    \frac{\lambda_1(\Gamma^\ast)}{\tau_1(\Gamma^\ast)} = \max_{\Gamma' \in \GClass} \frac{\lambda_1(\Gamma')}{\tau_1(\Gamma')}
\end{equation}
(under the convention that the Neumann leaf always corresponds to the same vertex in $G$).

We will show that $\gamaste$ can have at most $m-1$ edges of nonzero length. This, together with the induction hypothesis, will then imply that
\begin{equation*}
%\label{eq:ainda-nao-gamaste}
    \frac{\lambda_1(\Gamma)}{\tau_1(\Gamma)} < \frac{\lambda_1(\gamaste)}{\tau_1(\gamaste)} \leq 4,
\end{equation*}
where the strict inequality comes from the fact that $\Gamma$, having $m$ edges of positive length, cannot be a maximizer. (Note that our argument will only use the fact that $\gamaste$ is an interior critical point with respect to the edge length vector; it does not have to be a global maximizer for this graph topology.)

Suppose that $\gamaste$ in fact has all $m$ edges of positive length. Since $m\geq 3$, upon inserting a dummy vertex if necessary,
we can find a pendant star
\begin{equation}
\label{eq:pendant-star}
    {\PS} \subset \gamaste
\end{equation}
of $\gamaste$ with the following properties:

\begin{definition}
\label{def:pendant-star}
    For the remainder of this section, we will call a subgraph ${\PS}$ of a given tree $\Gamma$ a \emph{pendant star} if, as a graph, it is a star with at least three leaves, and there exists a point $w \in \Gamma$, taken to be a dummy vertex (i.e. of degree two, artificially inserted in the interior of an edge if necessary), such that, treating ${\PS}$ as a (closed) subset of $\Gamma$,
    \begin{displaymath}
        {\PS} \cap \overline{\Gamma \setminus {\PS}} = \{w\}.
    \end{displaymath}
    Unless otherwise stated, we will assume ${\PS}$ is equipped with a Dirichlet condition at all leaves except for $w$, that is, all leaves which, as vertices of $\Gamma$, are also Dirichlet leaves.
\end{definition}

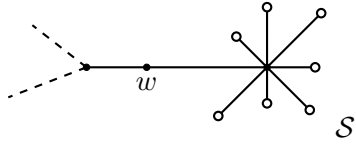
\begin{figure}[H]
\center
\begin{tikzpicture}[scale=0.8]
%\coordinate (a) at (0,0);
\coordinate (c) at (2,0);
\coordinate (b) at (4,0);
\draw[fill] (c) circle (1.5pt);
%\draw[thick, dashed] (a) -- (c);
\draw[thick] (c) -- (b);
\draw[fill] (b) circle (1.5pt);
\draw[thick] (b) -- (4.8,0);
\draw[thick] (b) -- (4,1);
\draw[thick] (b) -- (4,-0.6);
\draw[thick] (b) -- (4.9,0.9);
\draw[thick] (b) -- (4.71,-0.71);
\draw[thick] (b) -- (3.49,0.51);
\draw[thick] (b) -- (3.19,-0.81);
\draw[thick,fill=white] (4.8,0) circle (2pt);
\draw[thick,fill=white] (4,1) circle (2pt);
\draw[thick,fill=white] (4,-0.6) circle (2pt);
\draw[thick,fill=white] (4.9,0.9) circle (2pt);
\draw[thick,fill=white] (4.71,-0.71) circle (2pt);
\draw[thick,fill=white] (3.49,0.51) circle (2pt);
\draw[thick,fill=white] (3.19,-0.81) circle (2pt);
\draw[thick] (1,0) -- (c);
\node at (c) [anchor=north] {$w$};
\draw[fill] (1,0) circle (1.5pt);
\draw[thick,dashed] (1,0) -- (0.1,0.7);
\draw[thick,dashed] (1,0) -- (-0.3,-0.5);
\node at (5.3,-1) {${\PS}$};
\end{tikzpicture}
\caption{A pendant star ${\PS}$ attached to a tree at the dummy vertex $w$.}
\label{fig:pendant-star}
\end{figure}

Note in particular that there is a unique edge in $\Gamma$ (the edge corresponding to, or containing, the dummy vertex $w$) which links ${\PS}$ to the rest of $\Gamma$. We will always assume that our pendant star ${\PS}$ does \emph{not} contain the distinguished leaf of $\Gamma$ which is equipped with a Neumann condition for $\tau_1$, that is, all leaves of $\Gamma$ which are also leaves of ${\PS}$ (of which we require that there be at least two), should in fact be equipped with a Dirichlet condition in both.

To simplify the next steps of the proof, we will use the setup of Theorem~\ref{thm:edge-length-continuity}: For a given discrete graph $G$ with $n$ edges, we consider all graphs $\Gamma (\ell) \in \mathcal{G}_G$ with underlying discrete graph $G$ and edge length vector $\ell := (\ell_1,\ldots,\ell_n)$, with $\ell_i$ being the length of the $i$-th edge $e_i$. We write $\lambda_1(\ell):=\lambda_1(\Gamma(\ell))$ and $\tau_1(\ell):=\tau_1(\Gamma(\ell))$ for the respective eigenvalues, $c_{D,i}$ for the amplitude of the eigenfunction associated with $\lambda_1(\ell)$ on the edge $\ell_i$ and $c_{N,i}$ the amplitude of the eigenfunction associated with $\tau_1(\ell)$, where both are assumed to have $L^2$-norm $1$ on $\Gamma(\ell)$. Additionally, we also denote the frequencies by $k_D = \sqrt{\lambda_1}$ and $k_N = \sqrt{\tau_1}$ (we will omit the dependence on $\Gamma$ and $\ell$ when there is no danger of confusion).

\begin{lemma}
\label{lem:critical-point-equal-amplitudes}
Let $\Gamma$ be any graph. Suppose the edge lengths, all assumed positive, are such that the map $\R^m \ni \ell \mapsto \frac{\lambda_1(\ell)}{\tau_1(\ell)}$ is at a critical point. Then the amplitudes $c_{D,i}$ and $c_{N,i}$ are equal for every $i = 1,...,m$, i.e. there exists $c_i > 0$ such that 
for all $i = 1,...,n$, 
$c_{D,i} = c_{N,i} =: c_i$ .
\end{lemma}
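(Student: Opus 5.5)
The plan is to apply the Hadamard formula of Theorem~\ref{thm:edge-length-continuity}(2) to both $\lambda_1$ and $\tau_1$, and then simply write out the vanishing of the gradient of the quotient.

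\textbf{Step 1: setting up the formula.} Both $\lambda_1(\ell)$ and $\tau_1(\ell)$ are ground state eigenvalues on a connected graph with all edge lengths positive, so they are simple. Here $\tau_1 > 0$, since at least one Dirichlet leaf remains. Hence Theorem~\ref{thm:edge-length-continuity}(2) applies to each of them at the given edge length vector. On the edge $e_i$ the normalized eigenfunction for $\lambda_1$ solves $-\psi''=k_D^2\psi$, so it can be written as $\psi|_{e_i}(x) = c_{D,i}\cos(k_D x + \varphi_i)$. Here $c_{D,i}\ge 0$ is precisely the amplitude, and similarly for $\tau_1$ with $c_{N,i}$ and $k_N$. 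The quantity in the Hadamard formula is then
\begin{displaymath}
\psi'(x)^2+\lambda_1\psi(x)^2 = c_{D,i}^2k_D^2\left(\sin^2+\cos^2\right) = \lambda_1 c_{D,i}^2 .
\end{displaymath}
This shows at once why the expression is independent of $x\in e_i$. It follows that $\partial_{\ell_i}\lambda_1 = -\lambda_1 c_{D,i}^2$ and $\partial_{\ell_i}\tau_1 = -\tau_1 c_{N,i}^2$.

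\textbf{Step 2: the critical point condition.} Criticality of $\ell\mapsto\lambda_1/\tau_1$ means that for every $i$
\begin{displaymath}
0=\partial_{\ell_i}\frac{\lambda_1}{\tau_1}=\frac{\tau_1\partial_{\ell_i}\lambda_1-\lambda_1\partial_{\ell_i}\tau_1}{\tau_1^2}
=\frac{\lambda_1}{\tau_1}\left(c_{N,i}^2-c_{D,i}^2\right).
\end{displaymath}
Since $\lambda_1>0$, this forces $c_{D,i}^2=c_{N,i}^2$, and because both amplitudes are nonnegative, $c_{D,i}=c_{N,i}$.

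\textbf{Step 3: positivity.} It remains to see that the common value $c_i$ is strictly positive. Both ground states can be chosen positive in the interior of the graph, that is, away from the Dirichlet vertices. An eigenfunction with zero amplitude on some edge would vanish identically on that edge, contradicting this positivity. Hence $c_i>0$ for all $i$.

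The only point requiring care, and the step I expect to be the main one, is justifying the use of Theorem~\ref{thm:edge-length-continuity}(2) for both eigenvalues. This needs simplicity of $\lambda_1$ and $\tau_1$, together with the standard convention that the amplitude is the constant $c$ in the representation $c\cos(kx+\varphi)$. Once that is in place, the identity $\psi'^2+k^2\psi^2=c^2k^2$ reduces everything to the one-line computation in Step 2.
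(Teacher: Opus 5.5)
Your proof is correct and follows the paper's argument essentially line by line. The Hadamard formula gives $\partial_{\ell_i}\lambda_1 = -\lambda_1 c_{D,i}^2$ and $\partial_{\ell_i}\tau_1 = -\tau_1 c_{N,i}^2$, and criticality of the quotient then forces $c_{D,i}^2 = c_{N,i}^2$. The only differences are cosmetic (you use a cosine rather than a sine parametrization), plus your explicit justification of the simplicity and strict positivity of both ground states, which the paper uses without comment.
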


\begin{proof}%[Proof of Lemma~\ref{lem:critical-point-equal-amplitudes}.]
  By assumption, $\frac{\partial }{\partial \ell_i} \frac{\lambda_1}{\tau_1} = 0$ for all $i = 1,..., n$, which implies that
  \begin{equation}
    \label{eq:balancing-condition-ratio}
     \frac{\frac{\partial \lambda_1}{\partial \ell_i}}{\lambda_1} = \frac{\frac{\partial \tau_1}{\partial \ell_i}}{\tau_1}
  \end{equation}
  for all $i$. Denoting by $\psi_1 \sim \lambda_1$ and $\phi_1 \sim \tau_1$ the respective eigenfunctions, chosen positive and normalized to have $L^2$-norm 1, and associating $e_i \sim [0,\ell_i]$ we can write
  \begin{displaymath}
     \psi_1|_{e_i}(x) = c_{D,i} \sin(k_D(x+\theta_{D,i}))
  \end{displaymath}
  for all $i = 1,...,n$, and likewise
  \begin{displaymath}
      \phi_1|_{e_i}(x) = c_{N,i}\sin(k_N(x+\theta_{N,i})).
  \end{displaymath}
  Applying Theorem ~\ref{thm:edge-length-continuity}, we obtain
  \begin{displaymath}
      \frac{\partial \lambda_1}{\partial \ell_i} = -\lambda_1c_{D,i}^2\cos^2(k_D(x+\theta_{D,i}))-\lambda_1c_{D,i}^2\sin^2(k_D(x+\theta_{D,i})) = -\lambda_1c_{D,i}^2
  \end{displaymath}
  and likewise
  \begin{displaymath}
      \frac{\partial \tau_1}{\partial \ell_i} = -\tau_1c_{N,i}^2
  \end{displaymath}
for all $i = 1,...,n$. Combining this with \eqref{eq:balancing-condition-ratio} and noting that the amplitudes are positive, we get that
  \begin{displaymath}
      c_{D,i} = c_{N,i} \eqqcolon c_i
  \end{displaymath}
  for all $i$.
\end{proof}

\begin{lemma}
\label{lem:critical-point-equal-lengths}
Under the assumptions of Lemma \ref{lem:critical-point-equal-amplitudes}, let ${\PS}$ be a pendant star of $\Gamma$ (in the sense of Definition \ref{def:pendant-star}), with $n \geq 2$ Dirichlet leaves of lengths $\ell_1,\ell_2,...,\ell_n$. Then there exists $\ell > 0 $ such that $\ell_i = \ell$ for all $i = 1,..., n$. Moreover, the frequencies $k_D$ and $k_N$ satisfy the relation $k_D + k_N = \frac{\pi}{\ell}$. 
\end{lemma}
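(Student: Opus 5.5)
On each Dirichlet leaf edge $e_i$ of the pendant star ${\PS}$, parametrize $e_i \simeq [0,\ell_i]$ with $x=0$ at the Dirichlet leaf and $x=\ell_i$ at the center vertex $z$ of ${\PS}$. The Dirichlet condition at the leaf forces $\theta_{D,i}=0$, and the fact that ${\PS}$ does not contain the Neumann leaf forces $\theta_{N,i}=0$. So, by Lemma~\ref{lem:critical-point-equal-amplitudes},
\begin{displaymath}
    \psi_1|_{e_i}(x) = c_i \sin(k_D x), \qquad \phi_1|_{e_i}(x) = c_i \sin(k_N x),
\end{displaymath}
with the same amplitude $c_i>0$ in both. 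Both eigenfunctions are positive ground states, so $k_D\ell_i<\pi$ and $k_N\ell_i<\pi$. Continuity at $z$ then says that $c_i\sin(k_D\ell_i)$ and $c_i\sin(k_N\ell_i)$ are independent of $i$. Dividing these two constants gives
\begin{displaymath}
    F(\ell_i) := \frac{\sin(k_D \ell_i)}{\sin(k_N\ell_i)} = \frac{\psi_1(z)}{\phi_1(z)} =: \rho
\end{displaymath}
for all $i=1,\dots,n$.

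The first step is to compare $k_D$ and $k_N$. Since $\tau_1$ is obtained from $\lambda_1$ by relaxing one Dirichlet condition to Neumann, we have $\tau_1\le\lambda_1$. It is strictly smaller (for instance because $\psi_1$ has nonzero normal derivative at the Neumann leaf, so it is not a $\tau_1$-eigenfunction), hence $k_N<k_D$.

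The second step is to extract a second relation from the critical point condition. Lemma~\ref{lem:critical-point-equal-amplitudes} holds for every edge, including the edge containing $w$, and the $n\ge 2$ leaf edges alone do not determine $\ell_i$ from $F(\ell_i)=\rho$ without knowing $\rho$. The cleanest extra relation should come from the Kirchhoff condition at $z$. Let $e_0$ be the edge from $z$ towards $w$, with amplitude $c_0$ common to both eigenfunctions, and write $\psi_1|_{e_0} = c_0\sin(k_D x+\alpha)$ and $\phi_1|_{e_0}=c_0\sin(k_N x+\beta)$, with $x$ the distance from $z$. The Kirchhoff conditions at $z$ then read
\begin{displaymath}
    \sum_i c_i k_D\cos(k_D\ell_i) = c_0k_D\cos\alpha,\qquad \sum_i c_i k_N\cos(k_N\ell_i) = c_0 k_N\cos\beta,
\end{displaymath}
and continuity at $z$ gives $c_i\sin(k_D\ell_i)=c_0\sin\alpha$ and $c_i\sin(k_N\ell_i)=c_0\sin\beta$ for all $i$. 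From these I would deduce that the $c_i$ must be equal (or else use a monotonicity argument on $F$), so that $\sin(k_D\ell_i)$ and $\sin(k_N\ell_i)$ are each constant in $i$.

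The third step is the elementary trigonometric core. Suppose two leaf edges have $\ell_1\ne\ell_2$ but $\sin(k_D\ell_1)=\sin(k_D\ell_2)$ and $\sin(k_N\ell_1)=\sin(k_N\ell_2)$. With $k\ell\in(0,\pi)$, equality of sines with distinct arguments forces $k_D(\ell_1+\ell_2)=\pi$ and $k_N(\ell_1+\ell_2)=\pi$, hence $k_D=k_N$, contradicting the first step. So all $\ell_i$ equal a common $\ell$.

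The final step is the relation $k_D+k_N=\pi/\ell$, which I would get from criticality with respect to redistributing length between a leaf edge and $e_0$. Using the Hadamard formula Theorem~\ref{thm:edge-length-continuity}(2), and the fact that at criticality the derivative $\partial_{\ell_i}\log(\lambda_1/\tau_1)$ vanishes for every edge, one obtains a trigonometric identity linking $\alpha$, $\beta$, $k_D\ell$ and $k_N\ell$. Combining it with the Kirchhoff relations (now with equal $c_i=c$, so $n c\cos(k_D\ell)=c_0\cos\alpha$, and similarly for $k_N$), I expect it to reduce to $\sin(k_D\ell)=\sin(k_N\ell)$ with $k_N\ell\ne k_D\ell$. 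That forces $k_D\ell+k_N\ell=\pi$, which is the claimed relation.

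I expect this derivation to be the main obstacle. The first three steps are routine; the delicate part is showing that the critical point equations genuinely force $\sin(k_D\ell)=\sin(k_N\ell)$, i.e. $\rho=c_0/c_0\cdot 1$ in the appropriate normalization. This will require careful bookkeeping of the common amplitude $c_0$ on the connecting edge: equality of amplitudes on $e_0$ together with continuity means $|\sin\alpha|$ and $|\sin\beta|$ are tied to the same $c_i$, and I would try first to exploit precisely that.
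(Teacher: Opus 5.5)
Your setup matches the paper: sine representations with common amplitudes $c_i$ on the leaf edges, continuity at the centre $z$, $k_N<k_D$, and positivity giving $k_D\ell_i<\pi$. However, neither substantive step is actually established.

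\textbf{Equal lengths.} Your main route, deducing that the $c_i$ coincide from continuity and Kirchhoff at $z$, cannot work as stated. Kirchhoff only constrains the sum $\sum_i c_i\cos(k_D\ell_i)$, and since $c_i=\psi_1(z)/\sin(k_D\ell_i)$, equality of the $c_i$ is equivalent to what you are trying to prove. Your remark that $F(\ell_i)=\rho$ cannot determine the $\ell_i$ without knowing $\rho$ is also mistaken. The function $F(x)=\sin(k_Dx)/\sin(k_Nx)$ is strictly decreasing on $(0,\pi/k_D)$: the numerator $h(x)=k_D\cos(k_Dx)\sin(k_Nx)-k_N\sin(k_Dx)\cos(k_Nx)$ of $F'$ satisfies $h(0)=0$ and $h'(x)=-(k_D^2-k_N^2)\sin(k_Dx)\sin(k_Nx)<0$. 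Hence $F(\ell_i)=F(\ell_j)$ gives $\ell_i=\ell_j$ at once. This is the paper's argument (your parenthetical fallback), and it makes your third step unnecessary.

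\textbf{The relation $k_D+k_N=\pi/\ell$.} This is the real content of the lemma, and here you only state an expectation. The mechanism you propose is also misdirected. Since $\partial_{\ell_e}\log(\lambda_1/\tau_1)=c_{N,e}^2-c_{D,e}^2$, the first-order criticality conditions are exactly $c_{D,e}=c_{N,e}$ on every edge. Any length-redistribution derivative is a combination of these, so no further trigonometric identity comes from the Hadamard formula.

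What is needed is an elimination using $c_{D,0}=c_{N,0}=c_0$ on $e_0$ together with continuity and Kirchhoff at $z$. In your notation, with $c_i=c$:
\begin{itemize}
\item Continuity gives $C:=\sin(k_D\ell)/\sin(k_N\ell)=\sin\alpha/\sin\beta$.
\item Kirchhoff ($nc\cos(k_D\ell)=c_0\cos\alpha$, $nc\cos(k_N\ell)=c_0\cos\beta$) gives $\cos(k_D\ell)/\cos(k_N\ell)=\cos\alpha/\cos\beta$.
\item Squaring and substituting yields $g(\sin^2(k_N\ell))=g(\sin^2\beta)$ with $g(t)=(1-C^2t)/(1-t)$, which is injective if $C^2\neq 1$.
\item If $C^2\neq1$, then $\sin(k_N\ell)=\sin\beta$ (both are positive since $\phi_1(z)>0$), so $c=c_0$. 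Then $\sin\alpha=\sin(k_D\ell)$ and $\cos\alpha=\pm\cos(k_D\ell)$, and Kirchhoff becomes $(n\mp1)\cos(k_D\ell)=0$; likewise for $k_N$.
\item Because $n\geq2$, this forces $k_D\ell=k_N\ell=\pi/2$, contradicting $k_D>k_N$.
\item So $C=1$, i.e.\ $\sin(k_D\ell)=\sin(k_N\ell)$ with $k_D\ell\neq k_N\ell$ both in $(0,\pi)$, whence $k_D\ell+k_N\ell=\pi$.
\end{itemize}

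Your sketch never says where $n\geq 2$ enters this step, and it must. For $n=1$ the point $z$ is a dummy vertex, continuity and Kirchhoff hold identically with $c_0=c$, $\alpha=k_D\ell$, $\beta=k_N\ell$, and no relation between $k_D$, $k_N$ and $\ell$ follows.
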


The proof requires a careful analysis of the eigenfunctions on the pendant star and will be given in Appendix~\ref{sec:appendix}.

\begin{remark}
Note that the relation $k_D + k_N = \frac{\pi}{\ell}$ for ${\PS}$ implies that for any other pendant star $\widetilde{{\PS}}$ of $\Gamma$ (not containing the distinguished vertex $v$), the lengths of the Dirichlet leaves must also be equal to the same constant $\ell$.
\end{remark}

The next lemma is a key surgery result which recalls how Ashbaugh--Benguria compare $\Omega$ with a ball of the same first Dirichlet eigenvalue \emph{rather than a ball of the same volume} (see the description of the strategy in and around \cite[eq.~(2.2.7)]{A00}). We first need to introduce a bit more notation. Let $k_\alpha^I$ be the square root of the first eigenvalue $\lambda_1^I(\alpha)$ of an interval $I$ with a Dirichlet condition at one endpoint and a Robin condition with parameter $\alpha \in \R$ at the other, and $k_\alpha^S$ be the square root of the first eigenvalue $\lambda_1^{S}(\alpha)$ of (any) star graph $S$ with one leaf equipped with a Robin condition with parameter $\alpha$ and all other edges having the same length $\ell$ and a Dirichlet condition at their leaf.

\begin{lemma}
\label{lem:interval-star-dirichlet-robin-comparison}
Let $\alpha_D,\alpha_N \in \mathbb{R}$ be any two real numbers such that $k_{\alpha_D}^{S} > k_{\alpha_N}^{{S}} > 0$ and $k_{\alpha_D}^{S} + k_{\alpha_N}^{S} = \frac{\pi}{\ell}$, where $\ell$ is the length of the Dirichlet leaves of the star ${S}$. Then there exists an interval $I$ such that $\lambda_1^I(\alpha_D) = \lambda_1^{S}(\alpha_D)$ and
\begin{equation}
\label{eq:interval-star-dirichlet-robin-comparison}
    %\frac{k_{\alpha_D}^I}{k_{\alpha_N}^I} > \frac{k_{\alpha_D}^S}{k_{\alpha_D}^{\PS}} 
    \lambda_1^I(\alpha_N) < \lambda_1^{{S}}(\alpha_N).
\end{equation}
%with the inequality being strict.
\end{lemma}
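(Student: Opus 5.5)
The plan is to reduce both eigenvalue problems to a one-point matching condition at the centre $c$ of the star, and then compare the ``Dirichlet parts'' of $S$ and of the interval through their Dirichlet-to-Neumann (logarithmic-derivative) data at the two relevant frequencies. Write $S$ as $n \geq 2$ Dirichlet edges of length $\ell$ together with one edge $e_0$ of length $a$ joining $c$ to the Robin leaf. Fix $\alpha$ and let $k=k^S_\alpha$. The first eigenfunction is positive, so by uniqueness and symmetry it equals $A\sin(kx)$ on every Dirichlet edge, with $x$ measured from the leaf. Kirchhoff's condition at $c$ then says that the restriction $g$ of the eigenfunction to $e_0$ solves $-g''=k^2g$, satisfies the Robin condition at the far end, and has
\[
  \frac{g'(c)}{g(c)} = n k\cot(k\ell),
\]
with the derivative taken pointing into $e_0$. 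The interval $I$ will have the same form: a Dirichlet segment $[0,s]$ followed by an edge of length $a$ carrying the same Robin parameter. Its matching condition at the junction is the same, with $nk\cot(k\ell)$ replaced by $k\cot(ks)$.

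\emph{Step 1: choice of $s$.} Since $k_{\alpha_D}^S > k_{\alpha_N}^S$ and $k_{\alpha_D}^S + k_{\alpha_N}^S = \pi/\ell$, we have $k_D\ell > \pi/2 > k_N\ell$, where $k_D := k_{\alpha_D}^S$ and $k_N := k_{\alpha_N}^S$. Hence $\cot(k_D\ell) < 0$. I will choose $s$ with $k_Ds \in (\pi/2,\pi)$ and
\[
  \cot(k_Ds) = n\cot(k_D\ell).
\]
This $s$ exists because $\cot$ maps $(\pi/2,\pi)$ onto $(-\infty,0)$. Since $n\geq 2$ the right-hand side is strictly below $\cot(k_D\ell)$, so $s>\ell$ strictly. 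The function equal to $\sin(k_Dx)$ on $[0,s]$, suitably scaled, and to $g$ on $e_0$ is then a positive eigenfunction on $I$ with eigenvalue $k_D^2$. Positivity holds because $k_Ds<\pi$. By positivity it is the ground state, so $\lambda_1^I(\alpha_D)=\lambda_1^S(\alpha_D)$.

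\emph{Step 2: the key comparison at $k_N$.} Using $k_N\ell = \pi - k_D\ell$, the star's Dirichlet data at $k_N$ is
\[
  n\cot(k_N\ell) = -n\cot(k_D\ell) = -\cot(k_Ds) = \cot(\pi - k_Ds).
\]
Since $(k_N+k_D)s > (k_N+k_D)\ell = \pi$, we get $k_Ns > \pi - k_Ds$. Both arguments lie in $(0,\pi)$, where $\cot$ is strictly decreasing, so
\[
  \cot(k_Ns) < n\cot(k_N\ell).
\]
In words, at frequency $k_N$ the interval's Dirichlet segment is strictly ``softer'' than the $n$ Dirichlet edges of the star.

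\emph{Step 3: test function.} Let $\varphi$ be the positive star eigenfunction for $\alpha_N$, and let $g=\varphi|_{e_0}$. On $I$, define $f=g$ on the edge of length $a$, and $f = g(c)\sin(k_Nx)/\sin(k_Ns)$ on $[0,s]$. This is admissible: it is continuous, and it is positive on $[0,s]$ because $k_Ns<k_Ds<\pi$. For any solution $u$ of $-u''=k^2u$ on a segment, integration by parts gives $\int u'^2 - k^2\int u^2 = [uu']$ at the endpoints. Applying this on every edge, all contributions cancel except the boundary terms at $c$. Hence
\[
  h_{\alpha_N}[f] - k_N^2\|f\|^2 = g(c)^2\big(k_N\cot(k_Ns) - nk_N\cot(k_N\ell)\big) < 0,
\]
using $g(c)=\varphi(c)>0$. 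By the variational characterisation, $\lambda_1^I(\alpha_N) < k_N^2 = \lambda_1^S(\alpha_N)$, which is \eqref{eq:interval-star-dirichlet-robin-comparison}.

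\emph{Main obstacle.} The delicate point is keeping the trigonometric branches consistent. We need $s$ on the branch $k_Ds\in(\pi/2,\pi)$, so that $k_Ns$ stays in $(0,\pi)$ and the monotonicity of $\cot$ can be used. We also need the star's first eigenfunction to be symmetric on the Dirichlet edges. Once these are in place, the hypothesis $k_D+k_N=\pi/\ell$ turns the comparison into the single inequality $s>\ell$, and this is exactly where $n\geq 2$ is used.
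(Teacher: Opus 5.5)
Your proof is correct, but it follows a different route from the paper's. As in your notation, write $k_D:=k^S_{\alpha_D}$ and $k_N:=k^S_{\alpha_N}$.

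\textbf{The paper's route.} It takes the Dirichlet--Robin interval of length $\ell_D=(\pi-\theta_D)/k_D$. Using monotonicity of $k\cot(kL)+\alpha$ in $L$ and in $k$, it reduces the claim to the length comparison $\ell_N<\ell_D$. It then adds the two Kirchhoff conditions at the centre, which is where $k_D+k_N=\pi/\ell$ enters, to obtain $\theta_N=\pi(1-r/\ell)-\theta_D$. After eliminating $r$, it finishes by showing that $f(x)=\frac{1}{\ell}\text{arccot}(n\cot(x\ell))-x$ is positive on $(\frac{\pi}{2\ell},\frac{\pi}{\ell})$, via a second-derivative computation.

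\textbf{Your route.} You keep the Robin edge $e_0$ fixed and replace the $n$ Dirichlet edges by a single segment with the same Dirichlet-to-Neumann value at $k_D$. You then compare at $k_N$ using a test function whose energy defect is a single boundary term at the junction. This means you never solve the eigenvalue equation on $I$ at $\alpha_N$ explicitly.

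\textbf{How the two connect.} Both constructions produce the same interval: $s+a=\ell_D$, since each is the Dirichlet--Robin interval with ground-state frequency $k_D$. Moreover $\text{arccot}(n\cot(k_D\ell))=k_Ds$, so $f(k_D)=k_D(s-\ell)/\ell$. The paper's final inequality is therefore exactly your $s>\ell$, which you obtain in one line from the monotonicity of $\cot$ and $n\geq 2$; the concavity argument is not needed.

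\textbf{What each buys.} Your argument is shorter and more conceptual: it is really a Dirichlet-to-Neumann (surgery-type) comparison, and it would adapt to other pendant subgraphs. The paper's computation yields explicit formulas for $\ell_D$, $\ell_N$ and the phase relation between $\theta_D$ and $\theta_N$, at the cost of more trigonometric branch-tracking.
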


As with the proof of the previous lemma, the proof here requires a detailed study of the respective eigenfunctions using a number of trigonometric identities and relations, and will be given in Appendix~\ref{sec:appendix}.

%With these technical results to hand, we can now complete the proof of Theorem~\ref{thm:vc}. We will divide it into two parts: the proof of the actual inequality, and afterwards the proof that there is equality if and only if $\Gamma$ is an interval.

\begin{proof}[Conclusion of the proof of Theorem~\ref{thm:vc}]
We recall that we need to complete the induction step on the number of edges $m\geq 3$; to this end we return to assuming that $\gamaste$ is the tree satisfying \eqref{eq:gamma-ast-max-ratio}, and that $\gamaste$ has $m$ edges of positive length. This means in particular that Lemmas~\ref{lem:critical-point-equal-amplitudes} and~\ref{lem:critical-point-equal-lengths} apply, the latter to the pendant star ${\PS}$ described at the beginning of the proof (see \eqref{eq:pendant-star} and Definition~\ref{def:pendant-star}), with $n\geq 2$ Dirichlet leaves of the same length $\ell>0$ by Lemma~\ref{lem:critical-point-equal-lengths}.

As before, let $\psi_1 \sim \lambda_1(\gamaste)$ and $\phi_1 \sim \tau_1(\gamaste)$ be the two eigenfunctions, normalized appropriately. We take $\alpha_N, \alpha_D \in \R$ to be such that $\psi_1|_{{\PS}}$, respectively $\phi_1|_{{\PS}}$, is the first eigenfunction on the star with a Robin condition of strength $\alpha_D$, respectively $\alpha_N$, at the vertex $w$ which separates ${\PS}$ from the rest of $\gamaste$, and Dirichlet conditions at the other $n \geq 2$ leaves (see Figure~\ref{fig:pendant-star}).

For an interval $I$ and $\alpha \in \R$, as before denote by $\lambda_1^I(\alpha)$ the first eigenvalue of the Laplacian with a Dirichlet condition at one endpoint and a Robin condition of strength $\alpha$ at the other; we will likewise use $\lambda_1^{{\PS}}(\alpha)$ for the first eigenvalue on ${\PS}$ with a Robin condition at $w$ and a Dirichlet condition at all other leaves.

We let $I$ be the interval such that $\lambda_1^I(\alpha_D) = \lambda_1(\gamaste)$. Then by Lemma~\ref{lem:interval-star-dirichlet-robin-comparison} and our choice of $\alpha_D$, $\alpha_N$, we know that $\lambda_1^I(\alpha_D) = \lambda_1^{{\PS}}(\alpha_D)$ and $\lambda_1^I (\alpha_N) < \lambda_1^{{\PS}}(\alpha_N)$. We also denote by $\psi_1^I$ the eigenfunction on $I$ associated with $\lambda_1^I(\alpha_D)$ and by $\phi_1^I$ the eigenfunction associated with $\lambda_1^I(\alpha_N)$ (both chosen positive and with $L^2$-norm $1$).

We now construct a new graph $\widehat\Gamma$ from $\gamaste$ by deleting ${\PS}$ at $w$ and gluing $I$ (where we imagine that the Robin vertex of $I$ will correspond to $w$). Note that $\widehat\Gamma \in \mathcal{G}_{\gamaste}$, since $\widehat\Gamma$ can be obtained by shrinking the Dirichlet edges of ${\PS}$ to $0$, and altering the length of the remaining edge if necessary. We claim that
\begin{equation}
\label{eq:gamaste-um-chapeu}
    \frac{\lambda_1(\widehat\Gamma)}{\tau_1(\widehat\Gamma)}
    > \frac{\lambda_1(\gamaste)}{\tau_1(\gamaste)}.
\end{equation}
We note that \eqref{eq:gamaste-um-chapeu} will complete the proof of Theorem~\ref{thm:vc}, since we will have obtained a contradiction to the assumption that the maximizing graph for the ratio $\frac{\lambda_1}{\tau_1}$ within $\mathcal{G}_{\gamaste}$ had $m$ edges of strictly positive length: the maximum can only be attained at a graph with at most $m-1$ edges. In particular, for any graph with $m$ edges of positive length $\Gamma$, necessarily
\begin{equation}
\label{eq:ainda-nao-gamaste}
    \frac{\lambda_1(\Gamma)}{\tau_1(\Gamma)} < 4.
\end{equation}
 This means in particular that interval is the unique maximizer, since \eqref{eq:ainda-nao-gamaste} states directly that any tree with $m \geq 3$ edges (of nonzero length) has ratio strictly less than $4$.

The inequality \eqref{eq:gamaste-um-chapeu} will follow from a surgery argument. We first claim that $\lambda_1(\widehat\Gamma) = \lambda_1(\gamaste)$; to see this we consider the function
\begin{displaymath}
    \hat\psi_1(x) := \begin{cases} a\psi_1^I (x) \qquad &\text{if } x \in I,\\ \psi_1(x) \qquad &\text{if } x \in \widehat\Gamma\setminus I = \gamaste \setminus {\PS},\end{cases}
\end{displaymath}
where the constant $a>0$ is chosen to ensure that $\hat\psi_1$ is continuous at the vertex $w$ at which $I$ is glued to $\widehat\Gamma \setminus I$, and thus on $\widehat\Gamma$.

Now, since by our choice of the length of $I$ and of $\alpha_D$, $\hat\psi_1$ satisfies the same eigenvalue equation $-\hat\psi_1=\lambda_1(\gamaste)\hat\psi_1$ on both $I$ and $\widehat\Gamma\setminus I$, as well as continuity and the Kirchhoff condition at the vertex $w$, and continuity and Kirchhoff conditions at all other non-leaf vertices of $\widehat\Gamma$ (since $\psi_1$ and $\psi_1^I$ do), necessarily $\hat\psi_1$ is the first eigenfunction, with eigenvalue $\lambda_1(\gamaste)$, of $\widehat\Gamma$ with Dirichlet conditions at all leaves of $\widehat\Gamma$, as well as continuity and Kirchhoff conditions elsewhere. That is, $\lambda_1(\widehat\Gamma) = \lambda_1(\gamaste)$, as claimed.

Hence, to prove \eqref{eq:gamaste-um-chapeu} it suffices to prove the inequality $\tau_1(\widehat\Gamma)<\tau_1(\gamaste)$. To this end we denote by $\lambda_1^{\alpha}(\gamaste \setminus {\PS})$ the first eigenvalue of the Laplacian on $\gamaste\setminus{\PS}$ with the same vertex conditions as for $\tau_1$ except that at $w$ (the vertex where we cut ${\PS}$ from the rest of the graph) we impose a Robin potential of strength $-\alpha_N \in \R$ so that
\begin{displaymath}
    \tau_1 (\gamaste) = \lambda_1^{\gamaste \setminus {\PS}}(-\alpha_N),
\end{displaymath}
and $\phi_1$ continues to be the corresponding eigenfunction on $\gamaste \setminus {\PS}$ (cf.\ \cite[eq.~(3.2)]{BKKM19}). We note that, similarly,
\begin{displaymath}
    \tau_1 (\gamaste) = \lambda_1^{{\PS}}(\alpha_N)
\end{displaymath}
precisely, by the choice of $\alpha_N$.

Now exactly the right conditions are satisfied to use the surgery principle in \cite[Theorem~3.10(1)]{BKKM19} (attaching a pendant graph at a vertex, with $k=r=1$, $\mathcal{G} = \gamaste \setminus {\PS}$, $\mathcal{H} = I$ and the attachment point at the vertex $w$): Since
\begin{displaymath}
    \lambda_1^I(\alpha_N) < \lambda_1^{{\PS}}(\alpha_N) = \tau(\gamaste) = \lambda_1^{\gamaste \setminus {\PS}}(-\alpha_N)
\end{displaymath}
(where the first inequality follows exactly from Lemma~\ref{lem:interval-star-dirichlet-robin-comparison}), we obtain
\begin{displaymath}
    \tau_1(\widehat\Gamma) < \tau_1(\gamaste),
\end{displaymath}
since $\widehat\Gamma$ is the graph obtained by gluing $I$ to $\gamaste \setminus {\PS}$ at $w$ and the condition imposed at $w$ will be a Robin condition with potential of strength $\alpha_N + (-\alpha_N) = 0$, i.e. we have continuity and Kirchhoff conditions at $w$.

This completes the proof of \eqref{eq:gamaste-um-chapeu} and hence of Theorem~\ref{thm:vc}.
\end{proof}

%\begin{proof}[Proof of the characterization of equality in Theorem~\ref{thm:vc}]
%\end{proof}

We finish this section by deriving Theorem~\ref{thm:ashbaugh-benguria} from the result we have just proved. The basic idea is to apply the latter to the \emph{Neumann domains} of the eigenfunction associated with $\lambda_1(\Gamma)$, that is, we cut apart $\Gamma$ at a point where the eigenfunction $\psi_1$, chosen positive, reaches its maximum.

\begin{proof}[Proof of Theorem~\ref{thm:ashbaugh-benguria} based on Theorem~\ref{thm:vc}]
    We start by proving the inequality. Let $v$ be any point, without loss of generality a vertex, at which the first eigenfunction $\psi_1$ of $\Gamma$, chosen positive attains a local maximum. This divides the graph $\Gamma$ into some number $n \geq 2$ of connected components $\Gamma_i$, $i=1,\ldots,n$ (formally, these are the graphs taken as the closures of the connected components of $\Gamma \setminus \{v\}$).

    Now since $\psi_1$, restricted to any $\Gamma_i$, satisfies the eigenvalue equation edgewise and the corresponding vertex conditions vertex-wise, we see it is an eigenfunction of the Laplacian on each $\Gamma_i$, with a Dirichlet condition at all leaves except the new leaf created at $v$, where it satisfies a Neumann condition. Since it is positive everywhere, a standard argument shows that it must be the first eigenfunction for this problem, thus in particular
    \begin{displaymath}
        \lambda_1(\Gamma) = \tau_1(\Gamma_i)
    \end{displaymath}
    for all $i=1,\ldots,n$.

    Now by Theorem~\ref{thm:vc} applied to each $\Gamma_i$, we know that
    \begin{displaymath}
        \frac{\lambda_1(\Gamma_i)}{\tau(\Gamma_i)} \leq 4
    \end{displaymath}
    for $i=1,\ldots,n$, and in particular for $i=1,2$. But on the other hand the inequality
    \begin{displaymath}
        \lambda_2(\Gamma) \leq \max\{ \lambda_1(\Gamma_1),\lambda_1(\Gamma_2)\}
    \end{displaymath}
follows from the variational characterization of $\lambda_2$:  Consider a pair of $\Gamma_i$, without loss of generality $i = 1,2$,
and let the eigenfunctions associated with $\lambda_1(\Gamma_i)$, extended by $0$ to the entire graph $\Gamma$ be denoted $\varphi_{1,2}$.  Choose $d_i$ so that $\varphi_\ast = d_1 \varphi_1 + d_2 \varphi_2 \in H_0^1(\Gamma, V_D)$ is normalized in $L^2$ and orthogonal to $\psi_1$.  It follows that
\begin{displaymath}
\begin{aligned}
\lambda_2 &\le \int_\Gamma{|\varphi_*'|^2}\\
&=\int_{\Gamma_1}{|d_1 \varphi_1'|^2}+
\int_{\Gamma_2}{|d_2 \varphi_2'|^2} \\
&\le |d_1|^2 \lambda_1(\Gamma_1) \int_{\Gamma_1}{|\varphi_1|^2} +
|d_2|^2 \lambda_1(\Gamma_2) \int_{\Gamma_2}{|\varphi_2|^2}\\
&\le \max(\lambda_1(\Gamma_1), \lambda_1(\Gamma_2) \int_\Gamma{|\varphi_*|^2}) = \max(\lambda_1(\Gamma_1), \lambda_1(\Gamma_2)).
\end{aligned}
\end{displaymath}

Hence
    \begin{displaymath}
        \frac{\lambda_2(\Gamma)}{\lambda_1(\Gamma)} \leq \max_i \left\{ \frac{\lambda_1(\Gamma_i)}{\tau_1(\Gamma_i)}\right\} \leq 4.
    \end{displaymath}

    We now consider the slightly more delicate case of equality. Keeping the notation from the above proof of the inequality, we suppose that $\frac{\lambda_2(\Gamma)}{\lambda_1(\Gamma)} = 4$.

    Take any pair of subgraphs, say $\Gamma_1$, $\Gamma_2$, then the above argument implies that
    \begin{displaymath}
        4 = \frac{\lambda_2(\Gamma)}{\lambda_1(\Gamma)} \leq \max_i \left\{ \frac{\lambda_1(\Gamma_i)}{\tau_1(\Gamma_i)}\right\} \leq 4,
    \end{displaymath}
    meaning there is equality everywhere. Suppose without loss of generality that the maximum is attained by $\Gamma_1$; then the characterization of equality in Theorem~\ref{thm:vc} implies that $\Gamma_1$ is an interval.

    But now note the equality $\max \{\lambda_1(\Gamma_1),\lambda_1(\Gamma_2)\} = \lambda_2(\Gamma)$; more precisely, the two eigenfunctions $\psi_{1,1}$ and $\psi_{1,2}$ associated, respectively, with $\lambda_1(\Gamma_1)$ and $\lambda_1(\Gamma_2)$ (and extended by zero to form test functions on $\Gamma$), yield equality when used as test functions in the variational characterization of $\lambda_2(\Gamma)$. This implies that there must be an eigenfunction $\psi_2$ associated with $\lambda_2(\Gamma)$ which is a linear combination of them, that is, $\psi_2 = a_1 \psi_{1,1} + a_2 \psi_{1,2}$ for some $a_1, a_2 \in \R$, see \cite[Lemma~4.1(1)]{BKKM19}. In fact, necessarily $a_1,a_2 \neq 0$, since $\psi_2$ must change sign but $\psi_{1,i}$, as the respective first eigenfunctions on $\Gamma_i$, do not.

    But this then also implies that $\lambda_1(\Gamma_1) = \lambda_1(\Gamma_2) = \lambda_2 (\Gamma)$, from which we may deduce that $\Gamma_2$ is also maximizing for the ratio $\frac{\lambda_1}{\tau_1}$ (that is, $\frac{\lambda_1(\Gamma_2)}{\tau_1 (\Gamma_2)} = 4$ as well). Hence, once again by the characterization of equality in Theorem~\ref{thm:vc}, $\Gamma_2$ must also be an interval. But since the first eigenvalues are equal, the intervals must have the same length as each other.

    Finally, if the number $n$ of connected subgraphs meeting at $v$ is more than two, we may now repeat this argument inductively, at each step considering the pair $\Gamma_1$ and $\Gamma_i$, to conclude that $\Gamma_i$ is likewise an interval of the same length as $\Gamma_1$. We can thus conclude, finally, that $\Gamma$ is an equilateral star with $n$ edges.
\end{proof}

\section{General bounds for Dirichlet trees}
\label{sec:general-dirichlet-trees}

\begin{proof}[Proof of Theorem~\ref{thm:dirichlet-tree-2k-k}]
Fix a compact Dirichlet tree $\Gamma$. Assume for the present that there exists an eigenfunction $\psi_k$ associated with $\lambda_k(\Gamma)$ which has (at least) $k$ nodal domains, that is, that there are (at least) $k$ connected components of $\{ x \in \Gamma: \psi_k(x) \neq 0\}$. Denote their respective closures by $\Gamma_1,\ldots,\Gamma_k$, which we may treat as subgraphs of $\Gamma$. Then a standard result states that, for all $i=1,\ldots,k$,
\begin{displaymath}
    \lambda_1(\Gamma_i) = \lambda_k (\Gamma),
\end{displaymath}
where on $\Gamma_i$ we impose Dirichlet conditions exactly at the leaves of $\Gamma_i$, including on $\partial\Gamma_i$; to see this, observe that $\psi_k|_{\Gamma_i}$ satisfies the eigenvalue equation $-\psi_k''=\lambda_k(\Gamma)\psi_k$ edgewise on $\Gamma_i$, as well as all relevant vertex conditions. Thus $(\lambda_k(\Gamma),\psi_k|_{\Gamma_i})$ is an eigenpair on $\Gamma_i$; since $\psi_k$, by construction, does not change sign on $\Gamma_i$, it must correspond to the first eigenvalue of $\Gamma_i$.

In particular, if we denote by $\Gamma'$ the disjoint union of the $k$ graphs $\Gamma_1,\ldots,\Gamma_k$, then
\begin{displaymath}
    \lambda_1(\Gamma') = \lambda_k(\Gamma') = \lambda_k (\Gamma),
\end{displaymath}
where this eigenvalue now has multiplicity $k$ in $\Gamma'$. Suppose without loss of generality that $\lambda_2(\Gamma_1) = \max_i \lambda_2(\Gamma_i)$.  Then
\begin{displaymath}
    \lambda_{2k}(\Gamma') \leq \lambda_2 (\Gamma_1),
\end{displaymath}
since $\lambda_{2k}(\Gamma')$ cannot be larger than the largest of the $2k$ eigenvalues (counted with multiplicities) $\lambda_1(\Gamma_1),\ldots,\lambda_1(\Gamma_k),\lambda_2(\Gamma_1),\ldots,\lambda_2(\Gamma_k)$ (by the same variational argument as in the proof of Theorem~\ref{thm:ashbaugh-benguria}).

On the other hand, $\lambda_{2k}(\Gamma) \leq \lambda_{2k}(\Gamma')$, since $\Gamma'$ was created from $\Gamma$ via the insertion of additional Dirichlet conditions. Hence, putting everything together,
\begin{displaymath}
    \frac{\lambda_{2k}(\Gamma)}{\lambda_k(\Gamma)} \leq \frac{\lambda_{2k}(\Gamma')}{\lambda_1(\Gamma')} \leq \frac{\lambda_2(\Gamma_1)}{\lambda_1(\Gamma_1)}.
\end{displaymath}
Now since $\Gamma_1$ is itself a compact Dirichlet tree, Theorem~\ref{thm:ashbaugh-benguria} implies that
\begin{displaymath}
    \frac{\lambda_{2k}(\Gamma)}{\lambda_k(\Gamma)} \leq \frac{\lambda_2(\Gamma_1)}{\lambda_1(\Gamma_1)} \leq 4.
\end{displaymath}

It remains to the consider the case where $\psi_k$ has fewer than $k$ nodal domains. Here we can use a now quite standard perturbation argument. More precisely, given any compact tree $\Gamma$, with (proper) underlying discrete graph $G$, it is known that the set of edge length vectors ${\bf l}$ for which all eigenvalues of the Dirichlet Laplacian on the metric tree graph $(G,{\bf l})$ are simple, and the $k$-th eigenfunction has exactly $k$ nodal domains (more precisely: its zeros do not coincide with any vertices of the graph, and there are exactly $k-1$ of them) is of the second Baire category in $(\R^m)_+$ (where $m$ is the number of edges of $G$). This follows directly from \cite[Remark~2.4 and Theorem~2.5]{B08}
with $\beta=0$.

Hence, in particular, there exists a sequence of compact Dirichlet trees $\Gamma_n$ for which the corresponding edge length vectors ${\bf l}_n \to {\bf l}$, where ${\bf l}$ is the edge length vector for $\Gamma$, and for which, for all $n$, $\psi_{k}^{(n)}$, the (unique up to scalar multiples) $k$-th eigenfunction on $\Gamma_n$, has exactly $k$ nodal domains; in particular,
\begin{displaymath}
    \frac{\lambda_{2k}(\Gamma_n)}{\lambda_k(\Gamma_n)} \leq 4
\end{displaymath}
by what was shown above. But now the continuity result in Theorem~\ref{thm:edge-length-continuity} implies that the same bound must be true in the limit, that is, for $\Gamma$.
\end{proof}

\begin{proof}[Proof of Corollary~\ref{cor:dirichlet-tree-k-j}]
For the first inequality in \eqref{eq:ab-kj}: Given $1 \leq j < k$, let $m \in \N$ be such that $2^{m-1}j < k \leq 2^m j$, that is, $m = \lceil \log_2 \frac{k}{j}\rceil$; then applying Theorem~\ref{thm:dirichlet-tree-2k-k} iteratively $m$ times yields
\begin{displaymath}
    \frac{\lambda_k (\Gamma)}{\lambda_j(\Gamma)} \leq \frac{\lambda_{2^m j}(\Gamma)}{\lambda_j(\Gamma)}\leq 4^m.
\end{displaymath}

For the second inequality in \eqref{eq:ab-kj}, that is, $4^{\lceil\log_2 \frac{k}{j}\rceil} \leq 4 \cdot \frac{k^2}{j^2}$: after some basic manipulations (take square roots and then logarithms of both sides), we see this will follow if $\lceil\log_2 x\rceil \leq \log_2(2x)$ for all $x>0$. But this latter inequality is immediate: given $x>0$, choose $m \in \Z$ such that $x \in (2^{m-1},2^m]$, then $\lceil\log_2 x\rceil = m$, while $\log_2(2x) \geq \log_2(2\cdot2^{m-1}) = m$.
\end{proof}

\begin{remark}
    As noted in Remark~\ref{rem:big-ab}(2), it is not clear what the optimal constant in \eqref{eq:ab-kj} should be. However, the only step in the proof of Corollary~\ref{cor:dirichlet-tree-k-j} which is not sharp (other than the step $4^{\lceil\log_2 \frac{k}{j}\rceil} \leq 4\cdot\frac{k^2}{j^2}$) is the estimate $\lambda_k < \lambda_{2^{m}j}$.
\end{remark}

\section{Bounds and optimizers for general graphs}
\label{sec:general-graphs}

We start with the simple proof of Proposition~\ref{prop:ratio-inf}.

\begin{proof}[Proof of Proposition~\ref{prop:ratio-inf}]
    It is immediate that the infimum must be at least $1$. For $k > j \geq 2$ we know that if $\Gamma_k$ is an equilateral $k$-star each of whose edges has length $\ell > 0$, then $\lambda_1(\Gamma_k) = \frac{\pi^2}{4\ell^2}$, while $\lambda_2(\Gamma_k) = \ldots = \lambda_k(\Gamma_k) = \frac{\pi^2}{\ell^2}$.

    For $j=1$, note that we are bounding the ratio $\frac{\lambda_k(\Gamma)}{\lambda_1(\Gamma)}$. If $\Gamma$ is connected (after removal of all Dirichlet vertices), then $\lambda_1(\Gamma)$ is simple, so that necessarily $\lambda_k(\Gamma) > \lambda_1(\Gamma)$.  Thus if $\lambda_k(\Gamma) = \lambda_1 (\Gamma)$ then necessarily $\Gamma$ must have a Dirichlet condition in the interior. It is clear that for an equilateral $k$-star $\Gamma_k$ with a Dirichlet condition at its center and all edges of length $\ell > 0$, we have $\lambda_1(\Gamma_k) = \ldots = \lambda_k(\Gamma_k) = \frac{\pi^2}{\ell^2}$.
\end{proof}

We next turn to Theorem~\ref{thm:general-graph-k-j} and as such suppose that the graph $\Gamma$ has some given number $N \geq 0$ of Neumann leaves and $\beta \geq 0$ independent cycles (as well as any number of Dirichlet leaves).

\begin{proof}[Proof of Theorem~\ref{thm:general-graph-k-j}]
    We claim that for the given graph $\Gamma$ as described above, there is a Dirichlet tree $\widetilde\Gamma$ such that
    \begin{equation}
    \label{eq:cutting-tree}
        \lambda_{k-N-\beta}(\widetilde\Gamma) \leq \lambda_k(\Gamma) \leq \lambda_k(\widetilde\Gamma),
    \end{equation}
    where the first inequality holds for all $k \geq N + \beta + 1$, and the second for all $k \geq 1$.
    
    Indeed, we first replace all $N$ Neumann leaves of $\Gamma$ by Dirichlet leaves; this creates a new (quantum) graph $\Gamma'$ (which is however equal to $\Gamma$ as a metric graph) having $\beta$ cycles but only Dirichlet leaves; then by \cite[Theorem~3.4]{BKKM19},
    \begin{equation}
    \label{eq:no-leaves}
        \lambda_{k-N}(\Gamma') \leq \lambda_k(\Gamma) \leq \lambda_k(\Gamma'),
    \end{equation}
    where the first inequality holds for all $k \geq N+1$ and the latter inequality holds for all $k \geq 1$.
    
    We now create $\widetilde\Gamma$ out of $\Gamma'$ as follows: we select any $\beta$ points in the interior of edges, whose removal would turn $\Gamma'$ into a tree; this we can do since $\Gamma'$, like $\Gamma$, has first Betti number $\beta$. We impose Dirichlet conditions directly on these $\beta$ points and call the new graph $\widetilde\Gamma$; this is equivalent to changing the vertex condition at each of these $\beta$ points from a $0$-delta condition to an $\infty$-delta condition; as such, still by \cite[Theorem~3.4]{BKKM19},
    \begin{equation}
    \label{eq:no-cycles}
        \lambda_{k-\beta}(\widetilde\Gamma) \leq \lambda_k(\Gamma') \leq \lambda_k(\widetilde\Gamma),
    \end{equation}
    where again the first inequality holds for all $k \geq \beta + 1$ and the second holds for all $k \geq 1$.

    Combining \eqref{eq:no-leaves} and \eqref{eq:no-cycles} immediately yields \eqref{eq:cutting-tree}.

    To finish the proof, we invoke Corollary~\ref{cor:dirichlet-tree-k-j}: For any $k \geq j \geq N + \beta  + 1$ we have
    \begin{equation}
    \label{eq:general-graph-explicit}
        \frac{\lambda_k(\Gamma)}{\lambda_j(\Gamma)} \leq \frac{\lambda_k (\widetilde\Gamma)}{\lambda_{j-N-\beta}(\widetilde\Gamma)} \leq 4^{\lceil \log_2 (\frac{k}{j-N-\beta})\rceil} \leq 4 \cdot \frac{k^2}{(j-N-\beta)^2},
    \end{equation}
    where the first inequality was \eqref{eq:cutting-tree} and the second was \eqref{eq:ab-kj}.
\end{proof}

We finish with the general existence result assuming that the number of vertices is bounded from below; to that end, we assume $m \geq 2$ is fixed, as are $1 \leq j < k$ under the assumptions of Theorem~\ref{thm:existence}. The proof is quite a direct consequence of our first existence result, Proposition~\ref{prop:existence-optimizers}.

\begin{proof}[Proof of Theorem~\ref{thm:existence}]
Since the maximal number of edges is fixed, for any $\Gamma$ satisfying the assumptions of the theorem there are only finitely many possibilities for the (proper) underlying discrete graph of $\Gamma$, including the choice of vertices. In particular, the set of all graphs on at most $m$ edges, where any combination of Dirichlet and standard conditions is allowed on any graph, is equal to a finite union of sets of the form $\mathcal G_G$, where, as noted before Theorem~\ref{thm:edge-length-continuity}, we consider two discrete graphs to be different if they are equal as graphs but have different vertex conditions associated with them.

Now, by Proposition~\ref{prop:existence-optimizers}(1), in each set $\mathcal C_G$ there is a maximizer and a minimizer of the ratio $\frac{\lambda_k}{\lambda_j}$, which in particular have at most $m$ edges. Since there are only finitely many possible choices of $G$, a maximizer and a minimizer necessarily exist in the class of all graphs with at most $m$ edges.
\end{proof}

\begin{proof}[Proof of Corollary~\ref{cor:existence}]
We note at the outset that we will obtain this corollary from Proposition~\ref{prop:existence-optimizers}, essentially as a corollary of the proof of Theorem~\ref{thm:existence} and not of the actual statement of the latter.

Given bounds on $D$, $N$ and $\beta$, the total number of (non-dummy) vertices of the graph $\Gamma$ is bounded: Namely, if $\beta = 0$ and $\Gamma$ is a tree then it can have no more than $2(D+N)-1$ vertices, with equality (only) the case of a finite, rooted binary tree for which $D+N$, the total number of leaves, is a power of $2$, and the root of the tree is not counted as a vertex since it has degree two. If $\beta > 0$, then as long as $\Gamma$ is not a cycle it can be transformed into a tree $\widetilde\Gamma$ at the cost of cutting through each cycle once, in the middle of an edge, a total of $\beta$ times to produce a tree with $D$ Dirichlet leaves and $N+2\beta$ Neuman leaves, and thus at most $2(D+N+\beta)-1$ vertices, which also has $\beta$ more vertices than $\Gamma$.

Hence, as in the proof of Theorem~\ref{thm:existence}, if we restrict to those graphs with at most $D$ Dirichlet leaves, $N$ Neumann leaves and $\beta$ independent cycles, there are at most finitely many possible
underlying discrete graphs, and thus, by Proposition~\ref{prop:existence-optimizers}(1), there exist maximizers and minimizers among all graphs in the union of suitable classes of the form $\mathcal G_G$. Yet the number of cycles, along with the number of leaves of either kind, can only stay the same or decrease in the limit as one or more edge lengths shrink to zero, and hence the maximizers and minimizers among the union of the $\mathcal G_G$ still satisfy the same bounds on $D$, $N$ and $\beta$.
\end{proof}

\appendix

\section{Proofs of the lemmas}
\label{sec:appendix}

This appendix contains the proofs of the technical lemmas from Section~\ref{sec:ab-dirichlet-trees}.

\begin{proof}[Proof of Lemma~\ref{lem:critical-point-equal-lengths}]
    Before continuing, we note the relations
    \begin{displaymath}
        \frac{\pi}{\ell} > k_D > k_N > 0.
    \end{displaymath}
    The first inequality is just (strict) monotonicity with respect to domain inclusion (see, e.g., \cite[Corollary~3.12(1)]{BKKM19}, applicable since the first eigenfunction is strictly positive except at the leaves,) the second is immediate since we are replacing a Dirichlet condition with a Neumann condition (see, e.g., \cite[Theorem~3.4 or Lemma~4.1]{BKKM19}, which also implies the third inequality since with $k_N$ there is still at least one Dirichlet vertex). We will use these inequalities several times without further comment.

    Associate each Dirichlet leaf $e_i \sim [0,\ell_i]$, where $0$ corresponds to the leaf and $\ell_i$ to the central vertex. Then, using Lemma~\ref{lem:critical-point-equal-amplitudes}, the normalized eigenfunctions $\psi_1 \sim \lambda_1$ and $\phi_1 \sim \tau_1$ can be written as
    \begin{displaymath}
        \psi_1|_{e_i}(x) = c_i\sin(k_Dx)
    \end{displaymath}
    and
    \begin{displaymath}
        \phi_1|_{e_i}(x) = c_i\sin(k_Nx)
    \end{displaymath}
    for all $i = 1,...,n$.
    The continuity condition on the central vertex gives us the following system of equations:
    \begin{displaymath}
        c_i\sin(k_D\ell_i) = c_j\sin(k_D\ell_j)
    \end{displaymath}
    and
    \begin{displaymath}
        c_i\sin(k_N \ell_i) = c_j\sin(k_N \ell_j)
    \end{displaymath}
    for all $i,j = 1,...,n$, $i \neq j$. Dividing the first equation by the second, which we can do since eigenfunctions associated with the first eigenvalue do not have interior zeros, we get that
    \begin{displaymath}
        \frac{\sin(k_D \ell_i)}{\sin(k_N \ell_i)} = \frac{\sin(k_D\ell_j)}{\sin(k_N\ell_j)}
    \end{displaymath}
    for all $i,j = 1,...,n$, $i \neq j$.
    By the injectivity of the function $x \mapsto \frac{\sin(\alpha x)}{\sin(\beta x)}$ on the interval $[0,\frac{\pi}{\alpha}]$ for any $0 < \beta < \alpha$, noting that $\ell_i \in [0,\frac{\pi}{k_D}]$ for any $i$, we must have $\ell_i = \ell_j \eqqcolon \ell$ for all $i,j$. 
    Observe that this also implies $c_i = c_j \eqqcolon c$ for all $i,j$.
    
    It remains to show the relation $k_D + k_N = \frac{\pi}{\ell}$; for this we will need a careful analysis of the eigenfunctions. Let $e_0 \subset \Gamma \setminus {\PS}$ be the necessarily unique edge connecting the pendant star to the rest of the graph. Associate $e_0 \sim [0,\ell_0]$, where $0$ corresponds to the central vertex of ${\PS}$, and write $\psi_1|_{e_0}(x) = c_0 \sin(k_D(x+\theta_{D}))$ and $\phi_1|_{e_0}(x) = c_0\sin(k_N(x+\theta_{N}))$. The continuity conditions between any leaf and $e_0$ at the central vertex then become
    \begin{equation}
        \label{eq:continuity-condition-dirichlet}
        c_0\sin(k_D\theta_D) = c\sin(k_D\ell)
    \end{equation}
    and
    \begin{equation}
        \label{eq:continuity-condition-neumann}
        c_0\sin(k_N\theta_N) = c\sin(k_N \ell).
    \end{equation}
    Dividing \eqref{eq:continuity-condition-dirichlet} by \eqref{eq:continuity-condition-neumann}, we arrive at
    \begin{equation}
    \label{eq:continuity-quotient}
        \frac{\sin(k_D\theta_D)}{\sin(k_N\theta_N)} = \frac{\sin(k_D\ell)}{\sin(k_N \ell)} \eqqcolon C > 0.
    \end{equation}
    Our goal will be to show that $C = 1$.

    Consider the Kirchhoff condition at the central vertex, which gives us the following two equations:
    \begin{equation}
    \label{eq:kirchhoff-condition-dirichlet}
        nc\cos(k_D\ell) = c_0\cos(k_D\theta_D)
    \end{equation}
    and
    \begin{equation}
    \label{eq:kirchhoff-condition-neumann}
        nc \cos(k_N\ell) = c_0\cos(k_N\theta_N).
    \end{equation}
    Dividing \eqref{eq:kirchhoff-condition-dirichlet} by \eqref{eq:kirchhoff-condition-neumann}, we obtain
    \begin{displaymath}
        \frac{\cos(k_D\ell)}{\cos(k_N\ell)} = \frac{\cos(k_D\theta_D)}{\cos(k_N\theta_N)}.
    \end{displaymath}
    Squaring both sides and using \eqref{eq:continuity-quotient}, we can rewrite the above as
    \begin{displaymath}
        \frac{1-C^2\sin^2(k_N\ell)}{1-\sin^2(k_N\ell)}=\frac{1-C^2\sin^2(k_N\theta_N)}{1-\sin^2(k_N\theta_N)}.
    \end{displaymath}
    
    Suppose now that $C^2 \neq 1$. Then, by the injectivity of the function $x \mapsto \frac{1-C^2x^2}{1-x^2}$ on the interval $[0,1]$, we conclude that $\sin(k_N \ell) = \sin(k_N \theta_N)$. Using \eqref{eq:continuity-condition-neumann}, it then follows that $c = c_0$, which in turn implies that $\sin(k_D\ell) = \sin(k_D\theta_D)$ by \eqref{eq:continuity-condition-dirichlet}.

    The above equalities imply that $\cos(k_D\ell) = \pm \cos(k_D\theta_D)$ and $\cos(k_N\ell) = \pm \cos(k_N\theta_N)$. Substituting these equalities back into \eqref{eq:kirchhoff-condition-dirichlet} and \eqref{eq:kirchhoff-condition-neumann}, we arrive at
    \begin{displaymath}
        (n\pm 1)\cos(k_D\ell) = 0
    \end{displaymath}
    and
    \begin{displaymath}
        (n\pm 1)\cos(k_N\ell) = 0.
    \end{displaymath}
    Since $n \geq 2$ and $0 < \ell < \frac{\pi}{k_D} < \frac{\pi}{k_N}$, the above equations would imply that $k_D\ell$ and $k_N\ell$ are both equal to the first positive zero of the cosine, that is, $k_D = k_N = \frac{\pi}{2\ell}$, an immediate contradiction.

    It follows that $C^2 = 1$, which implies $C = 1$ since $C$ is positive. This in turn implies that $\sin(k_D\ell) = \sin(k_N\ell)$ which has a unique solution given by $k_D\ell = \pi-k_N\ell$ which can be rewritten as $k_D+k_N = \frac{\pi}{\ell}$.
\end{proof}

\begin{proof}[Proof of Lemma~\ref{lem:interval-star-dirichlet-robin-comparison}]
    First, we identify $I \sim [0,L]$ for some $L > 0$, where $0$ corresponds to the Dirichlet vertex and $L$ to the Robin vertex with parameter $\alpha$. The first eigenfunction $\psi_1$ can then be written as $\psi_1(x) = c\sin(k_\alpha^Ix)$ and so the Robin condition reduces to
    \begin{displaymath}
        k_\alpha \cos(k_\alpha L) +\alpha\sin(k_\alpha L) = 0,
    \end{displaymath}
    which, by using the identity $\text{arccot}(-x) = \pi-\text{arccot}(x)$, can be rewritten as
    \begin{equation}
    \label{eq:robin-condition-interval}
        L = \frac{\pi-\text{arccot}\left(\frac{\alpha}{k_\alpha}\right)}{k_\alpha}.
    \end{equation}
    We do the same for the star ${S}$, identifying the edge with the Robin condition $e_0 \sim [0,r]$ for some $r > 0$, where $x = 0$ corresponds to the Robin vertex.
    On that edge, the first eigenfunction $\phi_1$ can be written as $\phi_1|_{e_0}(x) = c_0\sin(k_\alpha^{S}x+\theta)$, where $\theta$ must satisfy:
    \begin{displaymath}
        -k_\alpha^{S} \cos(\theta)+\alpha\sin(\theta) = 0,
    \end{displaymath}
    which can be rewritten as
    \begin{equation}
    \label{eq:robin-condition-star}
        \theta = \text{arccot}\left(\frac{\alpha}{k_\alpha^{S}} \right).
    \end{equation}
    Denoting by $\theta_N = \text{arccot}\left(\frac{\alpha_N}{k_{\alpha_N}^{S}} \right)$ and $\theta_D = \text{arccot}\left(\frac{\alpha_D}{k_{\alpha_D}^{S}} \right)$, we define
    \begin{equation}
    \label{eq:length-choices}
    \ell_N = \frac{\pi-\theta_N}{k_{\alpha_N}^{S}} \quad \text{and} \quad
    \ell_D = \frac{\pi-\theta_D}{k_{\alpha_D}^{S}}.
    \end{equation}
    
    From equation \eqref{eq:robin-condition-interval}, it follows that an interval  $I$ with length $\ell_D$ and Robin parameter $\alpha_D$ has first eigenvalue $\lambda_1^{S}(\alpha_D)$, i.e, if we take $L = \ell_D$, then $k_{\alpha_D}^I = k_{\alpha_D}^{S}$ (analogously for $\alpha_N$ if we take $L = \ell_N$).
    
    To show the desired inequality, we need to show that for this choice of $L = \ell_D$, we have $k_{\alpha_N}^I < k_{\alpha_N}^{S}$. We claim that it suffices to show $\ell_N < \ell_D$.
    
    Indeed, note that, for $\alpha$ fixed, whenever $0 < k_\alpha L < \pi$, the functions $L \mapsto k_\alpha \cot(k_\alpha L)+\alpha$ and $k_\alpha \mapsto k_\alpha \cot(k_\alpha L)+\alpha$ are strictly decreasing. Hence if $\ell_N < \ell_D$, it will follow from the choice of $\ell_N$ \eqref{eq:length-choices} with the identity $-\cot(x) = \cot(\pi-x)$ that
    \begin{displaymath}
        k_{\alpha_N}^{S}\cot(k_{\alpha_N}^{S}\ell_D)+\alpha_N < 0;
    \end{displaymath}
    and since, by the definition of $k_{\alpha_N}^I$,
    \begin{displaymath}
        k_{\alpha_N}^I\cot(k_{\alpha_N}^I\ell_D)+\alpha_N = 0,
    \end{displaymath}
    we can then conclude that $k_{\alpha_N}^I < k_{\alpha_N}^{S}$, which is equivalent to \eqref{eq:interval-star-dirichlet-robin-comparison}.

    Hence, we need to show that
    \begin{equation}
    \label{eq:first-length-inequality}
        \ell_N = \frac{\pi-\theta_N}{k_{\alpha_N}^{S}}<\frac{\pi-\theta_D}{k_{\alpha_D}^{S}} = \ell_D.
    \end{equation}
    First, identifying the Dirichlet leaves of the star ${S}$, with the interval $[0,\ell]$ where $0$ corresponds to the Dirichlet vertex, the Kirchhoff condition at the central vertex can be written as
    \begin{equation}
    \label{eq:robin-kirchhoff-neumann}
        n\cot(k_{\alpha_N}^{S}\ell)+\cot(k_{\alpha_N}^{S}r+\theta_N) = 0
    \end{equation}
    and
    \begin{equation}
    \label{eq:robin-kirchhoff-dirichlet}
        n\cot(k_{\alpha_D}^{S}\ell)+\cot(k_{\alpha_D}^{S}r+\theta_D) = 0.
    \end{equation}
    Since, by hypothesis, $k_{\alpha_D}^{S}+k_{\alpha_N}^{S} = \frac{\pi}{\ell}$, we have that $\cot(k_{\alpha_N}^{S}\ell) = -\cot(k_{\alpha_D}^{S}\ell)$. Therefore, summing \eqref{eq:robin-kirchhoff-neumann} and \eqref{eq:robin-kirchhoff-dirichlet}, we get that
    \begin{displaymath}
        \cot(k_{\alpha_D}^{S}r+\theta_D) + \cot(k_{\alpha_N}^{S}r+\theta_N) = 0.
    \end{displaymath}
    Since $k_{\alpha_D}^{S} > k_{\alpha_N}^{S}$, we have that $k_{\alpha_D}^{S} > \frac{\pi}{2\ell} > k_{\alpha_N}^{S}$, which implies that $\cot(k_{\alpha_N}^{S}\ell) >  0$ and $\cot(k_{\alpha_D}^{S}\ell) < 0$. Using the above equations, this in turn implies that $k_{\alpha_N}^{S}r+\theta_N > \frac{\pi}{2}$ and $k_{\alpha_D}^{S}r+\theta_D < \frac{\pi}{2}$. Hence, the only possible solution to the above equation is
    \begin{displaymath}
        k_{\alpha_D}^{S}r+\theta_D = \pi - (k_{\alpha_N}^{S}r+\theta_N),
    \end{displaymath}
    from which it follows that
    \begin{displaymath}
        \theta_N = \pi\left(1-\frac{r}{\ell}\right)-\theta_D.
    \end{displaymath}
    Using this, $\eqref{eq:first-length-inequality}$ reduces to
    \begin{equation}
    \label{eq:second-length-inequality}
        \frac{\pi-\theta_D}{k_{\alpha_D}^{S}} > \frac{\pi\frac{r}{\ell}+\theta_D}{\frac{\pi}{\ell}-k_{\alpha_D}^{S}}.
    \end{equation}
    To further reduce the number of unknowns in the above inequality, using $\eqref{eq:robin-kirchhoff-dirichlet}$, we can write $r$ explicitly as a function of $k_{\alpha_D}^{S},\theta_D$ and $\ell$:
    \begin{displaymath}
        r = \frac{\pi-\text{arccot}(n\cot(k_{\alpha_D}^{S}\ell))-\theta_D}{k_{\alpha_D}^{S}},
    \end{displaymath}
    which is valid since $k_{\alpha_D}^{S}\ell \in \left(\frac{\pi}{2},\pi\right)$.

    Substituting this into $\eqref{eq:second-length-inequality}$, we obtain
    \begin{displaymath}
         \frac{\pi-\theta_D}{k_{\alpha_D}^{S}} - \frac{\frac{\pi}{\ell}\left( \frac{\pi-\text{arccot}(n\cot(k_{\alpha_D}^{S}\ell))-\theta_D}{k_{\alpha_D}^{S}}\right)+\theta_D}{\frac{\pi}{\ell}-k_{\alpha_D}^{S}} > 0,
    \end{displaymath}
    which can be further simplified to
    \begin{displaymath}
        \frac{\pi\left(\frac{1}{\ell}\text{arccot}(n\cot(k_{\alpha_D}^{S} \ell))-k_{\alpha_D}^{S} \right)}{k_{\alpha_D}^{S}\left(\frac{\pi}{\ell}-k_{\alpha_D}^{S}\right)} > 0.
    \end{displaymath}
    It suffices then to show that, for any $\ell > 0$ and $n \geq 2$, the function $f(x) = \frac{1}{\ell}\text{arccot}(n\cot(x \ell))-x$ is strictly positive on the interval $\left(\frac{\pi}{2\ell},\frac{\pi}{\ell}\right)$. But this is easily shown to be true since $f\left(\frac{\pi}{2\ell}^+\right) = f\left(\frac{\pi}{\ell}^-\right) = 0$ and its second derivative $f''(x) = \frac{2\ell n(n^2-1)\cot(x \ell)\csc^2(x \ell)}{(n^2\cot^2(x \ell)+1)^2}$ is strictly negative on that interval.
\end{proof}

\bibliographystyle{plain}

\end{document}